\documentclass[11pt]{amsart}

\usepackage{enumerate}
\usepackage{amsmath,amssymb}
\usepackage{comment}
\usepackage{bm}
\usepackage{xcolor}

\newtheorem{theorem}{Theorem}
\newtheorem{lemma}[theorem]{Lemma}
\newtheorem{cor}[theorem]{Corollary}
\newtheorem{proposition}[theorem]{Proposition}

\newtheorem{definition}[theorem]{Definition}

\theoremstyle{remark}
\newtheorem{remark}[theorem]{Remark}

\newcommand{\BH}{\mathcal{B(H)}}
\newcommand{\bC}{\mathbb{C}}

\newcommand{\cH}{{\mathcal{H}}}
\newcommand{\PH}{\mathcal{P}_1(\cH)}

\newcommand{\SC}{\mathcal{S}_C}

\newcommand{\Conj}{\operatorname{Conj}(\cH)}

\newcommand{\MC}{\mathcal{M}_n(\bC)}

\newcommand{\la}{\langle}

\newcommand{\ra}{\rangle}

\newcommand{\Span}{\operatorname{span}}

\begin{document}
\title[]{Linear maps preserving $C$-symmetry operators}

\author[M.Anzai]{Mayu Anzai}
\address{Graduate School of Science and Technology, 
Niigata University, Niigata 950-2181, Japan.}
\email{f25a046a@mail.cc.niigata-u.ac.jp}

\author[S.Oi]{Shiho Oi}
\address{Department of Mathematics, Faculty of Science, 
Niigata University, Niigata 950-2181, Japan.}
\email{shiho-oi@math.sc.niigata-u.ac.jp}

\thanks{The second author was supported in part by JSPS KAKENHI Grant Number JP24K06754.
}

\subjclass[2020]{Primary  47B02, 47B49.} 

\keywords{$C$-symmetry operators, Diagonal operators, Linear preserver}

\date{}

\begin{abstract}
 Let $\cH$ be a separable complex Hilbert space.  In this paper, we study a  continuous bijective linear map $T$ on the algebra of all bounded linear operators on $\cH$.  We characterize the map $T$ that maps the set of all $C$-symmetric operators  onto the set of all $\psi(C)$-symmetric operators, where $\psi$ is a commutativity-preserving bijection on the set of all conjugations. Furthermore, we show that this condition is equivalent to the existence of a bijection $\varphi$ on the set of all orthonormal bases of $\cH$ such that $T$ maps the set of all $\{e_n\}$-diagonal operators  onto  the set of all $\varphi(\{e_n\})$-diagonal operators.
\end{abstract}

\maketitle

\section{Introduction}
Throughout this paper, we assume that $\cH$ is a complex separable Hilbert space and $\BH$ is the algebra of all bounded linear operators on $\cH$. We denote the identity operator on $\cH$ by $I$. We first require a few preliminary definitions.
\begin{definition}
A map $C:\cH \rightarrow\cH$ is called a conjugation if
\begin{enumerate}
  \item[(i)] $C$ is antilinear, i.e. $C(\alpha x+y)=\overline{\alpha} C x+ Cy$ for $x,y\in\cH$ and $\alpha\in\bC$,
  \item[(ii)] $C$ is invertible with $C^{-1}=C$, and
  \item[(iii)] $\la Cx, Cy\ra=\la y,x\ra$ for all $x, y\in\cH$.
\end{enumerate}
We denote the set of all conjugations on $\cH$ by $\Conj$. Given an orthonormal basis $\{e_n\}$, a conjugation $C$ satisfying $Ce_n=e_n$ for all $n$ is called  a conjugation associated with $\{e_n\}$.  
\end{definition}

\begin{definition}
An operator $A \in\BH$ is said to be complex symmetric if
$CAC=A^*$ for some conjugation $C$ on $\cH$; in this case, $A$ is called  $C$-symmetric. For each conjugation $C$ on $\cH$, let $\SC$ denote the set of  $C$-symmetric operators on $\cH$. 
\end{definition}

Recall that the set of all complex symmetric operators is given by $\bigcup_{C \in \Conj} \SC$. For any conjugation $C \in \Conj$ on $\cH$,  $\SC$ is a complex linear subspace of $\BH$ that is closed under the involution $*$. It is well known that $A$ is a complex symmetric operator if and only if there is an orthonormal basis of $\cH$ with respect to which $A$ has a complex symmetric matrix representation. Garcia and Putinar initiated the study of complex symmetric operators. They showed that the class of complex symmetric operators contains many important classes of operators (see \cite{GP}). 

Linear preserver problems have been extensively studied over the past several decades on matrix algebras $\MC$ or $\BH$. 
In particular, linear maps preserving normal operators were characterized in \cite{Brs}. 
Subsequently, linear maps preserving diagonalizable matrices were characterized in \cite{OS}. 
Recently, Amara, Oudghiri, and Souilah studied maps on $\BH$ that preserve the class of $C$-symmetric operators for every conjugation $C$, or equivalently, the class of $\{e_n\}$-diagonal operators for every orthonormal basis $\{e_n\}$ in \cite{AOS}. Recall that, given an orthonormal basis $\{e_n\}$ of $\cH$, an operator $D \in \BH$ is called $\{e_n\}$-diagonal if it has a diagonal matrix representation with respect to the basis $\{e_n\}$. We simply call $D$ is diagonal if it is $\{e_n\}$-diagonal for some orthonormal basis $\{e_n\}_{n}$ of $\cH$. We denote the set of all orthonormal bases of $\cH$ by $\mathcal{E}(\cH)$, and the set of all $\{e_n\}$-diagonal operators by $\mathcal{D}_{\{e_n\}}$ for $\{e_n\} \in \mathcal{E}(\cH)$. Here, we regard an orthonormal basis as an ordered sequence. Thus, $\{e_1, e_2, \cdots \}$ and $\{e_2, e_1, \cdots \}$ are regarded as distinct elements of $\mathcal{E}(\cH)$. 
In \cite{AOS}, they proved the following theorem. 
\begin{theorem}{\cite[Theorem 1]{AOS}}\label{AOST}
Let $\cH$ be a complex separable Hilbert space with $\dim \cH \ge 3$ and $T: \BH \to \BH$ be an additive map. Then the following are equivalent.
\begin{itemize}
\item[(1)] $T(\SC) \subset \SC, \ \text{for every} \ C \in \Conj$,
\item[(2)] $T(\mathcal{D}_{\{e_n\}}) \subset \mathcal{D}_{\{e_n\}},  \ \text{for every} \ \{e_n\} \in \mathcal{E}(\cH)$,
\item[(3)] there exist $\alpha, \beta \in \mathbb{C}$ and an additive functional $f$ on $\BH$ such that 
\[
T(A)=\alpha A+\beta A^{*}+f(A)I, \text{for all} \ A \in \BH.
\]
\end{itemize}
\end{theorem}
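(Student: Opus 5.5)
The plan is to prove the cycle $(3)\Rightarrow(1)$, $(3)\Rightarrow(2)$, and then $(2)\Rightarrow(3)$ and $(1)\Rightarrow(3)$. The implications from (3) are immediate. For every conjugation $C$ we have $I\in\SC$ and $\SC$ is additive. It is closed under multiplication by complex scalars, since $C(\lambda A)C=\overline{\lambda}CAC=\overline{\lambda}A^*=(\lambda A)^*$. It is also closed under $*$, since $CA^*C=(CAC)^*=A$. Hence $\alpha A+\beta A^*+f(A)I\in\SC$ whenever $A\in\SC$. The same argument works for $\mathcal{D}_{\{e_n\}}$, which is a unital $*$-closed complex subspace.

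The link between (1) and (2) rests on one observation. If $C$ is the conjugation associated with $\{e_n\}$, then $\mathcal{D}_{\{e_n\}}=\bigcap\SC$, where the intersection runs over all conjugations $C'$ associated with the bases $\{\lambda_n e_n\}$, $|\lambda_n|=1$. Indeed, $A$ is $C'$-symmetric precisely when its matrix entries satisfy $a_{ij}\overline{\lambda_i}\lambda_j=a_{ji}$ in a suitable sense. Letting the phases vary forces $a_{ij}=0$ for $i\ne j$. So (1) implies (2) directly. In this way every implication reduces to proving $(2)\Rightarrow(3)$.

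For $(2)\Rightarrow(3)$ I would work with rank-one projections first. Fix a unit vector $x$ and write $P_x=x\otimes x$. The operator $T(P_x)$ is diagonal in every orthonormal basis containing $x$. An operator diagonal in all bases extending $x$ must commute with every unitary fixing $x$, so it has the form $a(x)P_x+b(x)I$. The same reasoning with two orthogonal vectors shows that $T$ maps $\operatorname{span}\{P_x,P_y,I\}$ into itself. Next I would analyse $T$ on $2\times 2$ compressions. For orthonormal $u,v$, consider $x=(u+v)/\sqrt2$ together with phase-rotated versions of $x$. Combining them yields the rank-one operators $u\otimes v$ and $v\otimes u$ modulo multiples of $I$, $P_u$ and $P_v$. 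The aim is to show that $T(u\otimes v)\equiv \alpha\,u\otimes v+\beta\,v\otimes u \pmod{\bC I}$.

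The main obstacle is showing that $\alpha$ and $\beta$ are constant, i.e.\ independent of $x$, $u$ and $v$, and that they act complex-linearly rather than merely additively. Additivity gives only $\mathbb{Q}$-linearity, and the antilinear part has to be captured by the $\beta A^*$ term. To handle this I would use the relation $P_{\lambda x}=P_x$ and the condition $\dim\cH\ge3$. This lets me compare $a(x)$ and $a(y)$ through a third vector $z$ orthogonal to both $x$ and $y$, which connects all the scalars. Once $T(F)\equiv\alpha F+\beta F^*\pmod{\bC I}$ holds on finite-rank operators, I would extend to all of $\BH$. For an arbitrary $A$, pick any basis $\{e_n\}$, and apply the diagonal condition to $A+D$ for diagonal $D$. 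This shows that $T(A)-\alpha A-\beta A^*$ commutes with enough projections to be scalar. The scalar then defines $f(A)$, which is additive because $T$ is.
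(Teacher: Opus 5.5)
The paper gives no proof of this theorem; it quotes it from Amara--Oudghiri--Souilah, so your argument has to stand on its own. Your implications $(3)\Rightarrow(1),(2)$ and $(1)\Rightarrow(2)$ are fine, and the intersection formula you use is exactly Lemma~\ref{L1}. The first step of $(2)\Rightarrow(3)$ is also fine, and it gives $T(\lambda P_x)=a_x(\lambda)P_x+b_x(\lambda)I$ for every $\lambda\in\bC$.

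\textbf{First gap: real-linearity of $a$.} You flag the upgrade from $\mathbb{Q}$- to $\mathbb{R}$-linearity but do not resolve it. The relation $P_{\lambda x}=P_x$ concerns the phase of a vector, not the scalar multiplying an operator. Your third-vector comparison can at best show that $a_x=a$ does not depend on $x$. A clean way to get that is the identity $P_x+P_z=P_{x'}+P_{z'}$ for two orthonormal bases of one plane, evaluated at a unit vector orthogonal to the plane. Nothing in your sketch rules out $a$ being $\mathbb{Q}$-linear but not $\mathbb{R}$-linear, which is incompatible with (3). The missing idea is a projection identity with arbitrary real coefficients. For $w=cu+sv$ with $c=\cos\theta$, $s=\sin\theta$ and $x_0=(u+v)/\sqrt2$,
\[
P_w=(c^2-cs)P_u+(s^2-cs)P_v+2cs\,P_{x_0}.
\]
Apply $T$ to $\lambda P_w$ and compare coefficients, using that $P_u,P_v,P_{x_0},I$ are linearly independent because $\dim\cH\ge3$. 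This gives $a(t\lambda)=t\,a(\lambda)$ for $t\in[-1,1]$, hence $a(\lambda)=\alpha\lambda+\beta\overline{\lambda}$. After that, $T(F)\equiv\alpha F+\beta F^*$ modulo $\bC I$ for finite-rank $F$ follows from additivity alone, since such $F$ is a finite sum of operators $\lambda P_w$.

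\textbf{Second gap: extension to all of $\BH$ when $\dim\cH=\infty$.} $T$ is merely additive, so no limiting argument is available, and hypothesis (2) says something about $T(A)$ only for diagonalizable $A$. For a non-diagonalizable $A$, such as the unilateral shift or multiplication by $t$ on $L^2[0,1]$, $A+D$ is diagonal in no basis. So ``apply the diagonal condition to $A+D$'' yields nothing. You need two ingredients.
First, a finite decomposition of every operator into diagonalizable ones. By the Weyl--von Neumann theorem, each self-adjoint operator is a diagonal operator plus a compact self-adjoint (hence diagonalizable) one. So every $A=H+iK$ is a sum of four diagonalizable operators.
Second, an argument for diagonal $A=\sum a_ne_n\otimes e_n$. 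Put $S(A)=T(A)-\alpha A-\beta A^*$; it preserves every $\mathcal{D}_{\{e_n\}}$ and sends finite-rank operators into $\bC I$, so $S(A)=\sum s_ne_n\otimes e_n$. The operator $A+(a_k-a_m)e_m\otimes e_m$ is also diagonal in the basis where $e_m,e_k$ are replaced by $(e_m\pm e_k)/\sqrt2$. Hence $S(A)$ plus a scalar is diagonal in that basis, which forces $s_m=s_k$. Thus $S(A)=f(A)I$ with $f$ additive.
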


In this paper, we investigate what happens when the condition 
$T(\SC) \subset \SC$ for every $C \in \Conj$
is replaced by the more general condition that  
\[
T(\SC) \subset \mathcal{S}_{\psi(C)},
\]
where $\psi:\Conj \to \Conj$ is a bijection. To state our main result, we introduce the following definitions.

\begin{definition} 
We define the Pauli matrices by
\[
\sigma_1=
\begin{pmatrix}
0&1\\
1&0
\end{pmatrix},
\qquad
\sigma_2=
\begin{pmatrix}
0&-i\\
i&0
\end{pmatrix},
\qquad
\sigma_3=
\begin{pmatrix}
1&0\\
0&-1
\end{pmatrix}. 
\]
Then every matrix $A\in M_2(\mathbb{C})$ can be written uniquely in the form
\[
A=\frac{1}{2}(aI+x_1\sigma_1+x_2\sigma_2+x_3\sigma_3), \quad a, x_1, x_2, x_3 \in \mathbb{C}. 
\]
We shall write
\[
A=\frac{1}{2}(aI+x\cdot \sigma), 
\]
where  $x=(x_1, x_2, x_3) \in \mathbb{C}^3$ and $\sigma=(\sigma_1, \sigma_2, \sigma_3)$. 
\end{definition}
It is well-known that $\frac{1}{2}(aI+x\cdot \sigma)$ is a rank one projection if and only if $a=1$ and $x=(x_1, x_2, x_3) \in \mathbb{R}^3$ with $x_1^2+x_2^2+x_3^2=1$. 

We now present the main results of this paper.

\begin{theorem}\label{main0}
Let $\cH$ be a complex Hilbert space with $\dim \cH=2$.  Let $T: M_2(\mathbb{C}) \to M_2(\mathbb{C})$ be a bijective linear map. Then the following are equivalent.
\begin{itemize}
\item[(1)] there is a bijection $\psi: \Conj \to \Conj$ which satisfies  
 \[
T(\SC) = {\mathcal{S}_{\psi(C)}},
\]
\item[(2)] there is a bijection $\varphi: \mathcal{E}(\cH) \to \mathcal{E}(\cH)$,  which satisfies  
\[
T(\mathcal{D}_{\{e_n\}}) = \mathcal{D}_{\varphi(\{e_n\})},
\] 
\item[(3)] there exists $c, \lambda \in \mathbb{C} \setminus \{0\}$, a linear functional $\ell$ on $\mathbb{C}^3$, and $B \in GL(3,\mathbb{R})$ such that 

\[
T(aI+x\cdot \sigma)=c aI+\ell(x)I+\lambda Bx
\] 
for  $a \in \mathbb{C}$ and $x \in \mathbb{C}^3$.
\end{itemize}

\end{theorem}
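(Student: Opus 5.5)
Throughout I would use $T(aI+x\cdot\sigma)$ as the paper writes it in (3), where "$\lambda Bx$" means $\lambda (Bx)\cdot\sigma$. The plan is to translate both families of subspaces into real geometry in $\mathbb{R}^3$ via the Pauli coordinates, and then prove (1)$\Leftrightarrow$(2) by intersections and sums, and (2)$\Leftrightarrow$(3) by a lemma on linear maps of $\bC^3$.

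\textbf{Step 1 (description of $\mathcal{D}_{\{e_n\}}$ and $\SC$).} Unitary conjugation $A\mapsto VAV^*$ with $V\in SU(2)$ fixes $I$ and acts on $x\cdot\sigma$ through the complexification of a rotation $R_V\in SO(3)$. Every $R\in SO(3)$ arises in this way.

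For the diagonal operators, $\mathcal{D}_{\{e_1,e_2\}}=\Span\{I,P\}$, where $P=e_1\otimes e_1=\tfrac12(I+p\cdot\sigma)$ and $p\in\mathbb{R}^3$ is a real unit vector. Hence
\[
\mathcal{D}_{\{e_n\}}=\bC I+\bC\,(p\cdot\sigma),
\]
which depends only on the real line $\mathbb{R}p$. Every real line occurs in this way.

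For the symmetric operators, let $J$ be the entrywise conjugation. Then $JAJ=\overline{A}$. For $A=aI+x\cdot\sigma$ this gives $\overline{A}=\bar aI+\bar x_1\sigma_1-\bar x_2\sigma_2+\bar x_3\sigma_3$, while $A^*=\bar aI+\bar x\cdot\sigma$. So $A\in\mathcal{S}_J$ if and only if $x_2=0$.

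Every conjugation is associated with some orthonormal basis, so every conjugation has the form $C=VJV^*$ with $V$ unitary. Then $\SC=V\mathcal{S}_JV^*$, and therefore
\[
\SC=\bC I+\{x\cdot\sigma: x\in (n^\perp)_{\bC}\}
\]
for some real unit vector $n$. Here $(n^\perp)_{\bC}$ denotes the complexification of the real plane $n^\perp$. Every real plane occurs as some $n^\perp$.

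\textbf{Step 2 ((1)$\Leftrightarrow$(2)).} I would use two incidence facts.
\begin{itemize}
\item[(a)] Two distinct planes $\mathcal{S}_{C}\neq\mathcal{S}_{C'}$ intersect in exactly some $\mathcal{D}_{\{e_n\}}$, namely the one belonging to the line $n^\perp\cap n'^\perp$.
\item[(b)] Two distinct diagonal algebras span exactly some $\SC$.
\end{itemize}

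Assume (1). Since $T$ is a linear bijection, it preserves intersections. By (a), $T$ maps the family of diagonal algebras onto itself.

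Each subspace $\mathcal{D}$ is labelled by many orthonormal bases: reorderings and phases give the same subspace. Every fibre of $\mathcal{E}(\cH)\to\{\text{subspaces}\}$ has the same cardinality. So any map of subspaces that is bijective lifts to a bijection $\varphi$ of $\mathcal{E}(\cH)$. The same remark applies to $\Conj$, where the fibres consist of the scalar multiples $e^{i\theta}C$ together with other conjugations having the same $\SC$.

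The converse (2)$\Rightarrow$(1) follows in the same way from (b), using that $T$ preserves sums.

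\textbf{Step 3 ((2)$\Rightarrow$(3)).} First, $\bigcap \mathcal{D}_{\{e_n\}}=\bC I$, and $T$ permutes the family $\{\mathcal{D}_{\{e_n\}}\}$. Hence $T(\bC I)=\bC I$, so $T(I)=cI$ with $c\neq0$.

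Next, let $L$ be the map induced by $T$ on the quotient $M_2(\bC)/\bC I\cong\bC^3$. Then $L$ is a linear bijection of $\bC^3$ that maps each complex line $\bC p$ with $p$ real onto another line $\bC q$ with $q$ real.

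The key lemma is that such an $L$ equals $\lambda B$ with $\lambda\neq0$ and $B\in GL(3,\mathbb{R})$. To prove it, write $Le_i=\mu_ib_i$ with each $b_i$ real; the $b_i$ are linearly independent. Consider $L(e_i+e_j)=\mu_ib_i+\mu_jb_j$. It must be a complex multiple of a real vector, and by independence of $b_i,b_j$ this forces $\mu_i/\mu_j\in\mathbb{R}$. Taking $\lambda=\mu_1$ and $B=[\mu_1^{-1}\mu_i b_i]_i$ gives the claim.

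Finally, lifting $L$ back to $M_2(\bC)$ gives
\[
T(x\cdot\sigma)=\ell(x)I+\lambda (Bx)\cdot\sigma ,
\]
where $\ell$ is linear because $T$ is linear. This is (3).

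\textbf{Step 4 ((3)$\Rightarrow$(2)).} Under (3), $T$ sends $\bC I+\bC(p\cdot\sigma)$ onto $\bC I+\bC(Bp\cdot\sigma)$. Since $B$ is real and invertible, $p\mapsto Bp/|Bp|$ induces a bijection on real lines, so $T$ permutes the diagonal algebras bijectively. Lifting to $\varphi$ as in Step 2 gives (2).

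\textbf{Main obstacle.} The main point of care is Step 1, namely confirming that $\SC$ corresponds exactly to a complexified real plane and $\mathcal{D}$ to a complexified real line. Once that dictionary is in place, the remaining steps are incidence geometry plus the short lemma in Step 3.
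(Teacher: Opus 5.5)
Your proof is correct, and for $(2)\Rightarrow(3)$ it is essentially the paper's argument. The paper tests $L$ on $e_1+e_2+e_3$ rather than on the pairwise sums $e_1+e_j$, which is an inessential variation.

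Where you differ is in how you link (1) and (2).

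\textbf{The paper's route.} It proves $(1)\Rightarrow(2)$ without Pauli coordinates. It writes $\mathcal{D}_{\{e_1,e_2\}}$ as the intersection of $\SC$ over the whole family $\Lambda_{e_1}$ (Corollary \ref{dim2o}). It then identifies the image $\bigcap_{C\in\Lambda_{e_1}}\mathcal{S}_{\psi(C)}$ as $\Span\{P,I\}$ via Lemma \ref{dim2t}, a dimension and functional-calculus argument that holds in every dimension. It closes the cycle with $(3)\Rightarrow(1)$, using the plane description of $\SC$ (Lemma \ref{plane}) and the fact that $B$ maps real planes to real planes.

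\textbf{Your route.} You put the full dictionary up front: diagonal algebras correspond to complexified real lines, and the spaces $\SC$ to complexified real planes. Both directions of $(1)\Leftrightarrow(2)$ then follow from two incidence facts: two distinct planes meet in a line, and two distinct lines span a plane. Your $(3)\Rightarrow(2)$ is the line version of the paper's $(3)\Rightarrow(1)$.

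\textbf{Comparison.} Your route is shorter and entirely self-contained in dimension $2$, with no spectral argument. The paper's route for $(1)\Rightarrow(2)$ instead reuses the intersection lemmas that also drive the case $\dim\cH\ge 3$, where no such finite-dimensional geometric dictionary exists.

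\textbf{Two small points you leave implicit.} Both are easily filled.
\begin{enumerate}
\item To see that $T$ maps the family of diagonal algebras onto itself, you need that every diagonal algebra is the intersection of two distinct $\SC$. This holds because every real line lies in two distinct real planes. You also need to apply the same reasoning to $T^{-1}$.
\item For the lift to $\psi$, you need all fibres of $C\mapsto\SC$ to have the same cardinality. One justification: all $\SC$ are unitarily conjugate, and $C\mapsto VCV^*$ carries fibres bijectively onto fibres. In fact, in dimension $2$ one checks that the fibre over $\SC$ is exactly $\mathbb{T}C$, which is the relation the paper's section implicitly uses.
\end{enumerate}
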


\begin{definition}
A bijection $\psi: \Conj \to \Conj$ is said to preserve commutativity if 
\[
 C_1C_2=C_2C_1 \Longleftrightarrow \psi(C_1)\psi(C_2)=\psi(C_2)\psi(C_1)
 \]
for all $C_1, C_2 \in \Conj$.

\end{definition}

\begin{theorem}\label{main}
Let $\cH$ be a complex separable Hilbert space with $\dim \cH \ge 3$.  Let $T: \BH \to \BH$ be a continuous bijective linear map. Then the following are equivalent.
\begin{itemize}
\item[(1)] there is a bijection $\psi: \Conj \to \Conj$ preserving commutativity, which satisfies  
 \[
T(\SC) = {\mathcal{S}_{\psi(C)}},
\]
\item[(2)] there is a bijection $\varphi: \mathcal{E}(\cH) \to \mathcal{E}(\cH)$,  which satisfies  
\[
T(\mathcal{D}_{\{e_n\}}) = \mathcal{D}_{\varphi(\{e_n\})},
\] 
\item[(3)] there exists $c \in \mathbb{C} \setminus \{0\}$, a bounded linear functional $f$ on $\BH$, and a unitary or anti-unitary $U \in \BH$ such that 
\[
T(A)=cUAU^{*}+f(A)I, \quad A \in \BH.
\] 
\end{itemize}
\end{theorem}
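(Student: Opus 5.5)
We prove (3)$\Rightarrow$(1), (3)$\Rightarrow$(2), then (1)$\Rightarrow$(2)$\Rightarrow$(3). The implications from (3) are direct computations. Let $T(A)=cUAU^*+f(A)I$. If $U$ is unitary and $A\in\SC$, then with $C'=UCU^*$ (again a conjugation) we get $C'(UAU^*)C'=UCACU^*=UA^*U^*=(UAU^*)^*$. Since $I\in\mathcal{S}_{C'}$, this gives $T(\SC)\subset\mathcal{S}_{C'}$.

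If $U$ is anti-unitary, $UAU^*$ is linear. Moreover $UCU^*$ is a unitary, and $C'=U C U^*$ composed appropriately (namely $C'=UCU^{-1}$) is again antilinear, isometric and involutive. Hence the same identity holds.

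Equality $T(\SC)=\mathcal{S}_{\psi(C)}$ follows from bijectivity of $T$, applied to $T^{-1}$, which has the same form. Indeed, solving $B=cUAU^*+f(A)I$ gives $A=c^{-1}U^*BU+g(B)I$, where $g$ is computed by applying the functional to the identity. This requires $c+f(I)\neq 0$, which holds because $T(I)=(c+f(I))I\neq0$.

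The map $\psi(C)=UCU^{-1}$ is a bijection and clearly preserves commutativity. Similarly, $\varphi(\{e_n\})=\{Ue_n\}$ works for (2), since $U(\sum d_n e_n\otimes e_n)U^*$ is $\{Ue_n\}$-diagonal (with conjugated entries in the anti-unitary case).

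For (1)$\Rightarrow$(2) we use the standard link between conjugations and diagonal operators. For an orthonormal basis $\{e_n\}$, let $\mathcal{C}_{\{e_n\}}$ be the family of conjugations $C$ with $Ce_n=\omega_ne_n$, $|\omega_n|=1$, that is, the conjugations commuting with the canonical one $J$ associated with $\{e_n\}$ and having each $e_n$ as an eigenvector. The plan is to show that
\[
\mathcal{D}_{\{e_n\}}=\bigcap_{C\in\mathcal{C}_{\{e_n\}}}\SC .
\]
A matrix symmetric in every rephased basis $\{\omega_n^{1/2}e_n\}$ must have $a_{jk}\omega_j^{1/2}\omega_k^{1/2}\cdots$ forced to vanish off the diagonal.

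Next we characterize these maximal commuting-type families purely by commutativity among conjugations. Two conjugations commute iff $C_1C_2$ is a unitary that is $C_1$-symmetric and involutive in a suitable sense. We would show that the families $\mathcal{C}_{\{e_n\}}$ (up to the ordering data) are exactly the maximal sets of pairwise commuting conjugations. Since $\psi$ preserves commutativity, it maps maximal commuting families to maximal commuting families. Therefore $T(\mathcal{D}_{\{e_n\}})=\bigcap T(\SC)$ is some $\mathcal{D}_{\{f_n\}}$, and we fix an ordering to define $\varphi$.

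For (2)$\Rightarrow$(3), $T$ maps diagonal operators onto diagonal operators and hence preserves the set of diagonal (normal with orthonormal eigenbasis) operators in both directions. Rank-one projections $P$ are characterized inside the lattice of diagonal algebras, for instance by $\{P\}\cup\{I\}$ spanning the intersection of all $\mathcal{D}_{\{e_n\}}$ containing $P$. So $T$ induces a map on $\PH$ modulo the identity. Orthogonality $P\perp Q$ is encoded by $P,Q$ lying in a common $\mathcal{D}_{\{e_n\}}$, and this relation is preserved. Wigner/Uhlhorn's theorem ($\dim\cH\ge3$) then gives a unitary or anti-unitary $U$ with $T(P)\in \Span\{UPU^*,I\}$.

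Linearity and continuity, the latter used to pass from finite-rank operators, which span a dense set of diagonal ones, then yield $T(P)=c_PUPU^*+f(P)I$. A consistency argument on sums $P+Q$ of orthogonal projections and on projections in different bases forces $c_P\equiv c$. Continuity extends this to all of $\BH$, where diagonal operators span a dense subspace; in fact they span everything.

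\textbf{Main obstacle.} The hardest step is the (1)$\Rightarrow$(2) identification of maximal commuting families of conjugations with bases. A subtlety is that commuting conjugations $C_1,C_2$ give a unitary $C_1C_2$ that is $C_1$-real, and one needs a joint "real" eigenbasis. In infinite dimensions $C_1C_2$ may have continuous spectrum, so maximality must be argued by producing an eigenbasis via a spectral argument. We would first try to show that a maximal commuting family contains conjugations whose products have pure point spectrum. This is where we expect the proof to need extra care.
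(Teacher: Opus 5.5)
Your check of (3)$\Rightarrow$(1) is fine, and your plan for (1)$\Rightarrow$(2) follows the same route as the paper's Lemma~\ref{commutative}. However, (2)$\Rightarrow$(3) has a genuine gap exactly where you pass to rank-one projections.

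The property you propose does not single out rank one. For \emph{every} projection $P\neq 0,I$, the bases $\{e_n\}$ with $P\in\mathcal{D}_{\{e_n\}}$ are precisely unions of orthonormal bases of $P(\cH)$ and $P(\cH)^{\perp}$. An operator diagonal in all of them is scalar on each summand and has vanishing off-diagonal blocks, so the intersection is $\Span\{P,I\}$ whatever the rank of $P$. The diagonal structure therefore only gives $T(\Span\{P,I\})=\Span\{Q,I\}$ for some projection $Q$. This is the paper's Lemma~\ref{projection1}, which also shows that orthogonal $P_1,P_2$ yield $Q_1,Q_2$ that are disjoint, nested or co-disjoint. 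Nothing so far rules out $\operatorname{rank}Q\ge 2$, and without that you have no self-map of $\PH$ to which Uhlhorn's theorem applies.

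This is precisely where the paper uses continuity (Lemma~\ref{comm}). For rank-one $P$, every spectral projection of a self-adjoint $B$ commuting with $P$ either lies above $P$ or is orthogonal to it. Lemma~\ref{projection1}, together with norm approximation of $B$ by spectral sums, then gives $T(\{A: AP=PA\})\subset\{A: AQ=QA\}$. For orthogonal rank-one $P_1,P_2$ the sum of the two commutants has codimension $2$ \cite[Lemma 3.2]{Oml}. For orthogonal $Q_1,Q_2$ it has codimension at least $4$ once $\operatorname{rank}Q_1\ge 2$, so both $Q_i$ must be rank one.

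\textbf{Closing step.} It breaks down when $\dim\cH=\infty$. Your argument that $c_P$ is constant is fine. But finite-rank operators are not norm dense in $\BH$, so continuity alone cannot propagate $T(A)=cUAU^*+f(A)I$ from finite-rank operators to all of $\BH$. The paper avoids this. It notes that $T(e_i\otimes e_i)\in\Span\{Ue_i\otimes Ue_i,I\}$ with nonzero coefficient forces $\mathcal{D}_{\varphi(\{e_n\})}=\mathcal{D}_{\{Ue_n\}}$. Hence $U^*T(\cdot)U$ leaves every $\mathcal{D}_{\{e_n\}}$ invariant, and Theorem~\ref{AOST} finishes.

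\textbf{Corrections to (1)$\Rightarrow$(2).}
\begin{enumerate}
\item With arbitrary phases, your family $\mathcal{C}_{\{e_n\}}$ is not commuting: conjugations with $Ce_n=\omega_ne_n$ and $C'e_n=\eta_ne_n$ commute iff $\omega_n=\pm\eta_n$ for all $n$. You need the sign conjugations $C^{\rho}_{\{e_i\}}$ (the paper's $\Lambda_{\{e_n\}}$). These still cut out $\mathcal{D}_{\{e_n\}}$: take $\rho\equiv1$ and single sign flips.
\item Your diagnosis of the obstacle is off. Commuting $C_1,C_2$ give $(C_1C_2)^2=I$, so $C_1C_2$ is a self-adjoint involution and has no continuous spectrum. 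The real infinite-dimensional issue is joint atomicity of the whole family.
\item Your target claim, that every maximal commuting family is some $\Lambda_{\{f_n\}}$, is false in infinite dimensions. On $L^2[0,1]$ the conjugations $f\mapsto(\chi_E-\chi_{[0,1]\setminus E})\overline{f}$ form a maximal commuting family with no common eigenvector.
\end{enumerate}
So aim only at $\psi(\Lambda_{\{e_n\}})$, as the paper does. Fix $C_0$, jointly diagonalize the commuting involutions $\psi(C_0)\psi(C^{\rho}_{\{e_i\}})$, and pick a $\psi(C_0)$-real basis in each joint eigenspace. Equality follows because any conjugation commuting with every $C^{\rho}_{\{e_i\}}$ is itself one of them. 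The joint diagonalization is immediate in finite dimensions; in infinite dimensions it is exactly the step that still needs an argument.
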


For convenience, we introduce the following notation, which will be used throughout the paper. By $\sigma(A)$,  we denote the spectrum of $A$. 
We denote $\mathbb T=\{\lambda\in\mathbb C \mid |\lambda|=1\}$. Set $N=\dim\cH$, and define $\mathcal{I}_N=\{1,\ldots,N\}$ if $N<\infty$,  and $\mathcal{I}_N=\mathbb{N}$ if $N=\infty$.

\section{Preliminaries}
In this section, we first present some preliminary results that hold for $\dim \cH \ge 2$.
\begin{lemma}{\cite[Lemma 1]{GP}}\label{GP1}
If $C$ is a conjugation on $\cH$, then there exists an orthonormal basis $\{e_n\}$ such that $Ce_n=e_n$ for all $n$.
\end{lemma}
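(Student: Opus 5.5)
The statement to prove is Lemma~\ref{GP1}: every conjugation $C$ on $\cH$ fixes the vectors of some orthonormal basis. The plan is to pass to the real subspace of $C$-fixed vectors, show it is a real Hilbert space whose complexification is $\cH$, and take an orthonormal basis there.

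\textbf{Step 1: the fixed set.} Let $K=\{x\in\cH : Cx=x\}$. By antilinearity, $K$ is a real linear subspace. Since $C$ is isometric by (iii), it is continuous, so $K$ is closed.

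\textbf{Step 2: real inner product and decomposition.} For $x,y\in K$, (iii) gives
\[
\la x,y\ra=\la Cx,Cy\ra=\la y,x\ra=\overline{\la x,y\ra},
\]
so the inner product is real on $K$. Hence $K$ with $\operatorname{Re}\la\cdot,\cdot\ra$ is a real Hilbert space. For any $z\in\cH$, set $x=\tfrac12(z+Cz)$ and $y=\tfrac{1}{2i}(z-Cz)$. Using $C^2=I$ and antilinearity,
\[
Cx=x,\qquad Cy=\tfrac{1}{-2i}(Cz-z)=y .
\]
So $x,y\in K$ and $z=x+iy$, which gives $\cH=K+iK$. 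The sum is direct: if $x\in K\cap iK$, write $x=iy$ with $y\in K$; then $x=Cx=-iy=-x$, so $x=0$.

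\textbf{Step 3: the basis.} Since $\cH$ is separable, so is $K$ (as a subset of a separable metric space). Apply Gram--Schmidt inside $K$ to a countable dense subset to get a real orthonormal basis $\{e_n\}$ of $K$. Gram--Schmidt stays in $K$ because all coefficients are real and $K$ is a real subspace. Each $e_n$ satisfies $Ce_n=e_n$, and the $e_n$ are orthonormal in $\cH$ because the inner product on $K$ is the restriction of that of $\cH$.

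\textbf{Step 4: completeness in $\cH$.} Suppose $z\perp e_n$ for all $n$, and write $z=x+iy$ with $x,y\in K$. Since $\la x,e_n\ra$ and $\la y,e_n\ra$ are real,
\[
0=\la z,e_n\ra=\la x,e_n\ra+i\la y,e_n\ra
\]
forces $\la x,e_n\ra=\la y,e_n\ra=0$ for all $n$. Completeness of $\{e_n\}$ in $K$ then gives $x=y=0$, so $z=0$. Thus $\{e_n\}$ is an orthonormal basis of $\cH$ with $Ce_n=e_n$.

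The main point to handle carefully is Step 4: the real/imaginary splitting is needed to transfer completeness from $K$ to $\cH$. Everything else is routine.
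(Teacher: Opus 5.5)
Your proof is correct and complete. The paper gives no argument for this lemma; it is simply quoted from Garcia--Putinar \cite{GP}, so there is no in-paper proof to compare against.

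Your route is a standard, self-contained proof of the fact. It passes to the real closed subspace $K=\{x : Cx=x\}$, uses the splitting $z=\tfrac12(z+Cz)+i\,\tfrac{1}{2i}(z-Cz)$ to get $\cH=K+iK$, and takes an orthonormal basis of the real Hilbert space $K$. The key transfer step is Step 4: because $\la x,e_n\ra$ and $\la y,e_n\ra$ are real, completeness passes from $K$ to $\cH$.

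Two minor remarks:
\begin{enumerate}
\item Step 4 only uses $\cH=K+iK$, so the directness $K\cap iK=\{0\}$ is true but not needed.
\item Separability is used only to build the basis of $K$ by Gram--Schmidt. Without it, the same argument goes through with Zorn's lemma supplying an orthonormal basis of $K$.
\end{enumerate}
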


\begin{lemma}\label{I}
Let $A \in \BH_{sa}$. There is $c \in \mathbb{R}$ such that $A=cI$ if and only if $A \in \SC$ for all conjugations $C$. 
\end{lemma}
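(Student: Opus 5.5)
The forward direction is immediate. If $A=cI$ with $c\in\mathbb{R}$, then for every conjugation $C$ antilinearity gives $CAC = C(cI)C = \bar c\, C^2 = cI = A^*$. So $A\in\SC$ for all $C$; self-adjointness is not even needed here.

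For the converse, let $A$ be self-adjoint with $CAC=A^*=A$ for every conjugation $C$. I will show that $A$ commutes with every unitary; then $A$ is a scalar, and self-adjointness makes the scalar real. Fix one conjugation $C_0$. By Lemma \ref{GP1} it comes from an orthonormal basis $\{e_n\}$ via $C_0(\sum x_ne_n)=\sum \overline{x_n}e_n$. For any unitary $U$, the map $UC_0U^*$ is again a conjugation: it is antilinear and involutive, and it satisfies $\langle UC_0U^*x, UC_0U^*y\rangle=\langle C_0U^*x,C_0U^*y\rangle=\langle U^*y,U^*x\rangle=\langle y,x\rangle$.

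Applying the hypothesis to $C_0$ and to $UC_0U^*$ gives $C_0AC_0=A$ and $UC_0U^*AUC_0U^*=A$. The second identity rewrites as $C_0(U^*AU)C_0=U^*AU$. So both $A$ and $U^*AU$ commute with $C_0$. I then use this for a cheap family of unitaries, namely $U=\lambda I$ composed with diagonal phase unitaries. A diagonal unitary $U=\mathrm{diag}(u_n)$ with $|u_n|=1$ gives $UC_0U^*e_n = UC_0(\overline{u_n}e_n)=u_n^2 e_n$, so it produces the conjugation $e_n\mapsto u_n^2e_n$ (extended antilinearly).

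Write $a_{mn}=\langle Ae_n,e_m\rangle$. The condition $C_0AC_0=A$ says $a_{mn}\in\mathbb{R}$. The condition for the conjugation $C_\mu: e_n\mapsto \mu_n e_n$ with $|\mu_n|=1$ reads $C_\mu A C_\mu = A$. Computing $C_\mu A C_\mu e_n = C_\mu(\mu_n\cdot)$ carefully gives $\langle C_\mu AC_\mu e_n,e_m\rangle=\mu_m\overline{\mu_n}\,\overline{a_{mn}}$. Hence $\mu_m\overline{\mu_n}\,a_{mn}=a_{mn}$ for all unimodular choices, because $a_{mn}$ is real. Taking $m\ne n$ with $\mu_m=i$ and $\mu_n=1$ forces $a_{mn}=0$, so $A$ is diagonal with real entries $a_n$.

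To see that all $a_n$ are equal, I use a conjugation that mixes two basis vectors. Take $C'$ associated with the rotated orthonormal basis obtained by replacing $e_1,e_2$ with $(e_1+ie_2)/\sqrt2$ and $(e_1-ie_2)/\sqrt2$, keeping the other vectors. On $\Span\{e_1,e_2\}$ this $C'$ acts by $e_1\mapsto e_1$ and $e_2\mapsto -e_2$, which is diagonal. So I will instead use the basis $f_1=(e_1+e_2)/\sqrt2$, $f_2=i(e_1-e_2)/\sqrt2$. Its conjugation sends $e_1\mapsto e_2$ up to a phase, because $\overline{\,\cdot\,}$ swaps the roles. Then $C'AC'=A$ applied to $e_1$ yields $a_1=a_2$.

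The most error-prone part is this explicit two-dimensional computation; I will verify it directly in $M_2(\mathbb{C})$, where the conjugation acts as $x\mapsto V\bar x$ with $V$ symmetric unitary and $V=\sigma_1$. In that case $V\overline{\mathrm{diag}(a_1,a_2)}V=\mathrm{diag}(a_2,a_1)$. This gives $a_1=a_2$, and by repeating over all pairs, $A=cI$ with $c$ real.
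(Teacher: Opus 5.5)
Your argument is correct, but it follows a different route from the paper.

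\textbf{The paper's route.} It splits into cases. For $\dim\cH=2$ it diagonalizes $A$ by a unitary $U$ and tests against the swap conjugation $UJU^*$. For $\dim\cH\ge 3$ it argues basis-free in two steps:
\begin{enumerate}
\item Every nonzero $x$ is an eigenvector. A conjugation fixing $x$ and sending the normalized orthogonal component of $Ax$ to a third orthogonal unit vector $z$ forces that component to vanish.
\item The eigenvalue is constant, shown by passing through a vector orthogonal to both $x$ and $y$.
\end{enumerate}
Both steps need a third dimension, which is why the two-dimensional case is handled separately.

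\textbf{Your route.} You fix one orthonormal basis and test $A$ against three explicit kinds of conjugations:
\begin{enumerate}
\item $C_0$ makes the matrix entries real.
\item The phase conjugations $C_\mu$ kill the off-diagonal entries. This is the same $i$-twist the paper uses in the proof of Lemma~\ref{onep}.
\item The swap conjugation on $\Span\{e_m,e_n\}$, which is the paper's $J$ planted in each pair, equalizes the diagonal entries.
\end{enumerate}

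\textbf{What each buys.} Your version gives a single uniform proof for all $\dim\cH\ge 2$, with no spectral theorem. It also does not really need $A=A^*$. Running the same computations with $CAC=A^*$ gives $a_{mn}=a_{nm}$, then $a_{mn}=0$ for $m\neq n$, then $a_m=a_n$. So you get $\bigcap_{C}\SC=\Span\{I\}$ directly, which is the form actually used in Lemma~\ref{I2}. The paper's route, in turn, is basis-free in dimension $\ge 3$ and shows directly that every vector is an eigenvector, at the cost of the case split.

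\textbf{Two expository remarks.} First, your announced plan of showing that $A$ commutes with every unitary is never carried out and is not needed; you only use $UC_0U^*$ to produce the $C_\mu$. Second, the swap step should be justified explicitly rather than by ``$\overline{\,\cdot\,}$ swaps the roles''. For example, $e_1=(f_1-if_2)/\sqrt2$ gives $C'e_1=(f_1+if_2)/\sqrt2=e_2$, and likewise $C'e_2=e_1$. Alternatively, define the swap conjugation directly. The detour through $(e_1\pm ie_2)/\sqrt2$ can simply be deleted.
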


\begin{proof}
We consider two cases: (i) $\dim \cH=2$ and (ii) $\dim \cH \ge 3$.\\
(i) Suppose $\dim \cH=2$. Assume that $A \in \SC$ for all conjugations $C$. Let $U$ be a unitary matrix such that 
\[
U^{*}AU=\begin{pmatrix}
\lambda_1 & 0  \\
0 & \lambda_2 
\end{pmatrix},
\]
 where $\sigma(A)=\{ \lambda_1, \lambda_2\}$. Let $\{(1,0) ,(0,1)\}$ be the standard basis of $\mathbb{C}^2$. 
A conjugation $J$ on $\mathbb{C}^2$ is defined by
\[
J(z_1, z_2)
=
(\overline{z_2}, \overline{z_1})
\]
for every $(z_1,z_2)\in\mathbb{C}^2$, where $(z_1,z_2)$ denote the coordinate representation with respect to the standard basis $\{(1,0) ,(0,1)\}$.  Then we get a conjugation $UJU^{*}$.  Since $A=UJU^{*}AUJU^{*}$, we get $U^{*}AU=JU^{*}AUJ$.  This implies 
\[
\begin{pmatrix}
\lambda_1 & 0 \\
0 & \lambda_2 
\end{pmatrix}=\begin{pmatrix}
\lambda_2 & 0 \\
0 & \lambda_1 
\end{pmatrix}.
\]
Thus we get $\lambda_1=\lambda_2$, which implies $A=cI$ for some $c \in \mathbb{R}$. \\

(ii) Suppose $\dim \cH \ge 3$. We assume that  $A \in \SC$ for all conjugations $C$.  Let $x \in \cH$ with $x \neq 0$. Then there are $\alpha \in \mathbb{C}$ and $y \in (\operatorname{span} \{x\} )^{\perp} $ such that
\[
Ax= \alpha x + y.
\]
Suppose that $y \neq 0$. As $\dim \cH \ge 3$, there is $z \in (\operatorname{span} \{x, y\} )^{\perp} $ with $\|z\|=1$. 
Let $C$ be a conjugation such that $Cx=x$ and $C(\frac{y}{\|y\|})=z$. We have 
\[
CAx=ACx=Ax.
\]
Thus 
\[
\overline{\alpha} x + \|y\|z= C(Ax)=\alpha x + y, 
\]
which is a contradiction. Thus $y=0$. Since $A \in \BH_{sa}$, we obtain $\alpha \in \mathbb{R}$. 
We define $\alpha(x) \in \mathbb{R}$ for any $x \in \cH \setminus \{0\}$  by 
\[
Ax= \alpha(x)x.
\]
Let $x, y \in \cH \setminus \{0\}$. If $\dim \operatorname{span} \{x,y\}=1$, then there is $\lambda \in \mathbb{C}$ such that $x=\lambda y$. We get 
\[
\alpha(x)x=A(x)=A(\lambda y)=\lambda A(y)=\lambda \alpha(y) y=\alpha(y) x.
\]
As $x \neq 0$, we obtain $\alpha(x)=\alpha(y)$. If $\dim \operatorname{span} \{x,y\}=2$, there is a non-zero vector $z \in \operatorname{span} \{x,y\} ^{\perp} \setminus \{0\}$ as $\dim \cH \ge 3$. Then there are orthogonal bases of $\cH$, $\{x,z, \cdots\}$ and $\{y, z, \cdots\}$. We have
\[
\alpha(x+z)(x+z)=A(x+z)=A(x)+A(z)=\alpha(x)x+\alpha(z)z.
\] 
It yields that 
\[
\alpha(x)=\alpha(x+z)=\alpha(z).
\]
By a similar argument for $\{y, z, \cdots\}$, we have
\[
\alpha(y)=\alpha(y+z)=\alpha(z).
\]
Thus we get $\alpha(x)=\alpha(z)=\alpha(y)$. As a conclusion, we have $\alpha(x)=\alpha(y)$ for any $x, y \in \cH \setminus \{0\} $. Thus $\alpha=\alpha(x)$ for any $x \in \cH \setminus \{0\}$.  Thus $A=\alpha I$. The converse follows immediately.
\end{proof}

\begin{lemma}\label{dim1}
Let $\Lambda \subset \Conj $. Then $\dim \bigcap_{C \in \Lambda} \SC =1$ if and only if  $\bigcap_{C \in \Lambda} \SC=\Span \{I\}$.
\end{lemma}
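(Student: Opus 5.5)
The plan is to prove the two directions separately. The reverse direction is trivial and uses nothing beyond the basic properties of conjugations. The forward direction reduces to Lemma \ref{I}, after showing that the one-dimensional intersection $\mathcal{V}=\bigcap_{C\in\Lambda}\SC$ is forced to contain $I$.

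\emph{The implication ($\Leftarrow$).} For every conjugation $C$ we have $CIC=C^2=I=I^*$, so $I\in\SC$. Hence $\Span\{I\}\subset\bigcap_{C\in\Lambda}\SC$. If this intersection equals $\Span\{I\}$, its dimension is $1$ because $I\neq 0$. (If $\Lambda=\emptyset$, we read the intersection as $\BH$, which has dimension at least $4$, so the statement holds vacuously there.)

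\emph{The implication ($\Rightarrow$).} Assume $\dim\mathcal{V}=1$. By the computation above, $I\in\SC$ for every $C$, so $I\in\mathcal{V}$. Since $I\neq0$ and $\mathcal{V}$ is one-dimensional, we get $\mathcal{V}=\Span\{I\}$. So the whole statement comes down to observing that $I$ is $C$-symmetric for every conjugation $C$, which is the converse part of Lemma \ref{I}.

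Lemma \ref{I} itself is not really needed beyond this fact. There is no genuine obstacle; the only point requiring care is the degenerate case $\Lambda=\emptyset$ noted above. If one prefers an argument in the spirit of Lemma \ref{I}, one can also note the following: since each $\SC$ is $*$-closed, so is $\mathcal{V}$. Therefore a one-dimensional $\mathcal{V}=\Span\{A\}$ contains the nonzero self-adjoint element $A+A^*$ or $i(A-A^*)$. This element lies in $\mathcal{V}$ but need not lie in every $\SC$, so the direct observation $I\in\mathcal{V}$ is the cleanest route.
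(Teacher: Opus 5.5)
Your proof is correct and is essentially the paper's argument. The paper also observes that $I$ lies in every $\SC$, citing the trivial direction of Lemma \ref{I}, which you verify directly via $CIC=C^2=I=I^*$. It then concludes that a one-dimensional subspace containing $I\neq 0$ must equal $\Span\{I\}$, and treats the converse as immediate.
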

\begin{proof}
We assume that $\dim \bigcap_{C \in \Lambda} \SC =1$. Then Lemma \ref{I} shows that $I \in \bigcap_{C \in \Lambda} \SC $. Since $\bigcap_{C \in \Lambda} \SC$ is a complex linear subspace of $B(\cH)$, we have $\Span \{I\} \subset \bigcap_{C \in \Lambda} \SC $. As $\dim \bigcap_{C \in \Lambda} \SC =1$, we obtain $\bigcap_{C \in \Lambda} \SC=\Span \{I\}$. The converse is clear.
\end{proof}

\begin{lemma}\label{projectionC}
Let $P \in \BH$ be a projection and $C \in \Conj$. Then 
\[
C(P(\cH))=P(\cH) \Longleftrightarrow P \in \SC.
\]
\end{lemma}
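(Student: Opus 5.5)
The plan is to unwind the definition of $\SC$ for a self-adjoint operator. Since $P$ is an orthogonal projection, $P^*=P$, so $P\in\SC$ means exactly $CPC=P$, or equivalently $CP=PC$ because $C^{-1}=C$. The lemma therefore reduces to showing that the antiunitary involution $C$ commutes with $P$ if and only if $C$ leaves the range $P(\cH)$ invariant. We treat $P$ as an orthogonal projection, as the paper does.

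For ($\Leftarrow$), assume $CP=PC$. If $y=Px\in P(\cH)$, then $Cy=CPx=PCx\in P(\cH)$, so $C(P(\cH))\subset P(\cH)$. Applying $C$ again and using $C^2=I$ gives $P(\cH)=C(C(P(\cH)))\subset C(P(\cH))$, hence equality.

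For ($\Rightarrow$), assume $C(P(\cH))=P(\cH)$. First we show that $C$ also preserves $P(\cH)^\perp=\ker P$. Take $z\in P(\cH)^\perp$ and $y\in P(\cH)$. Then $Cy\in P(\cH)$, and by property (iii) together with $C^2=I$,
\[
\la Cz, y\ra=\la Cz, C(Cy)\ra=\la Cy, z\ra=0 .
\]
So $Cz\perp P(\cH)$. Now decompose any $x$ as $x=Px+(I-P)x$. Antilinearity gives $Cx=CPx+C(I-P)x$, where the first term lies in $P(\cH)$ and the second in $P(\cH)^\perp$. By uniqueness of the orthogonal decomposition, $PCx=CPx$. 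Hence $CPC=P=P^*$, so $P\in\SC$.

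The only step needing any care is the invariance of the orthogonal complement in ($\Rightarrow$). There we must use the isometric/anti-inner-product property (iii) rather than linearity, and track the conjugate symmetry correctly. Everything else is a direct manipulation of the definitions.
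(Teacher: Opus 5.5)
Your proof is correct and follows essentially the same route as the paper. For ($\Leftarrow$) you use $Cx=CPx=PCx$ together with $C^2=I$. For ($\Rightarrow$) you show invariance of $P(\cH)^\perp$ via property (iii), and your orthogonal-decomposition step is just a repackaging of the paper's separate verification of $CPCx=Px$ on $P(\cH)$ and on $P(\cH)^\perp$.
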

\begin{proof}
Assume that $C(P(\cH))=P(\cH)$. Let $x \in P(\cH)$ and  $y=C(x) \in P(\cH)$. Therefore, $CPCx=CPy=Cy=x=Px$. For any $x \in P(\cH)^{\perp}$, we have
\[
\langle Cx, y \rangle=\langle Cy, x \rangle =0
\]
for any $y \in P(\cH)$. This implies that $Cx \in  P(\cH)^{\perp}$. Hence we get $CPCx=0=Px$. Thus,  we have
\[
CPC=P.
\]
To prove the converse implication, let $x \in P(\cH)$. We have $Cx=CPx=PCx \in P(\cH)$. Thus $C(P(\cH)) \subset P(\cH)$. It follows that $P(\cH)=C^2(P(\cH)) \subset CP(\cH)$. Hence we get $C(P(\cH)) = P(\cH)$.
\end{proof}

\begin{remark}\label{projC}
 If $C(P(\cH)) = P(\cH)$, the restrictions of $C$ to $P(\cH)$ and  $(I-P)(\cH)$ are conjugations on $P(\cH)$ and $(I-P)(\cH)$, respectively.
\end{remark}

\begin{lemma}\label{onep}
Let $P \in \BH$ be a projection and $\Lambda=\{ C \in \Conj \mid C(P(\cH))=P(\cH)\}$. Then 
\[
\bigcap_{C \in \Lambda} \SC=\Span \{P, I\}.
\]
\end{lemma}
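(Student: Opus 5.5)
The inclusion $\Span\{P,I\}\subset\bigcap_{C\in\Lambda}\SC$ is immediate. Every $C\in\Lambda$ satisfies $C(P(\cH))=P(\cH)$, so $P\in\SC$ by Lemma \ref{projectionC}, and $I\in\SC$ for every conjugation $C$. Each $\SC$ is a linear subspace, so the span lies in the intersection.

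For the reverse inclusion, I first reduce to self-adjoint operators. Each $\SC$ is closed under $*$, so the intersection is closed under $*$ as well. Hence if $A$ lies in the intersection, so do $\operatorname{Re}A=\frac12(A+A^*)$ and $\operatorname{Im}A=\frac1{2i}(A-A^*)$. It therefore suffices to show that every self-adjoint $A$ in the intersection belongs to $\Span\{P,I\}$. For such $A$ the condition $A\in\SC$ reads $CAC=A$.

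Write $M=P(\cH)$ and decompose $\cH=M\oplus M^\perp$, so that
\[
A=\begin{pmatrix}A_{11}&A_{12}\\ A_{21}&A_{22}\end{pmatrix}.
\]
By Remark \ref{projC}, every $C\in\Lambda$ has the form $C_1\oplus C_2$ with $C_1,C_2$ conjugations on $M$ and $M^\perp$. Conversely, any such direct sum is a conjugation on $\cH$ preserving $M$, so $\Lambda$ is exactly the set of these direct sums. In block form, $CAC=A$ gives
\[
C_2A_{21}C_1=A_{21},\qquad C_1A_{12}C_2=A_{12},\qquad C_iA_{ii}C_i=A_{ii}.
\]

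The key trick is to exploit phases. If $C_2$ is a conjugation on $M^\perp$ and $\mu\in\mathbb T$, then $\mu C_2$ is again a conjugation: it is antilinear, satisfies $(\mu C_2)^2=\mu\overline{\mu}C_2^2=I$, and is isometric in the required sense. Applying the first relation to both $C_2$ and $\mu C_2$ gives $\mu A_{21}=A_{21}$ for every $\mu\in\mathbb T$. Taking $\mu=-1$ yields $A_{21}=0$, and in the same way $A_{12}=0$. This holds trivially if $M$ or $M^\perp$ is $\{0\}$.

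It remains to identify the diagonal blocks. The block $A_{11}$ is self-adjoint on $M$ and satisfies $C_1A_{11}C_1=A_{11}$ for every conjugation $C_1$ on $M$. I apply Lemma \ref{I} with the Hilbert space $M$ in place of $\cH$. Its proof covers dimension $2$ and dimension at least $3$, and dimension $1$ is trivial since every operator is then a scalar. This gives $A_{11}=a\,I_M$ with $a\in\mathbb R$. The same argument on $M^\perp$ gives $A_{22}=b\,I_{M^\perp}$. Therefore
\[
A=aP+b(I-P)\in\Span\{P,I\},
\]
and when $P=0$ or $P=I$ only one block is present. The only point needing care is the verification that the family $\Lambda$ contains all direct sums $C_1\oplus C_2$, including the phase-rotated ones used above. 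I do not expect a genuine obstacle here, since the phase argument removes the off-diagonal blocks without any dimension restriction.
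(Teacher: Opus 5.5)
Your proof is correct and follows essentially the paper's route: you reduce to self-adjoint $A$, compare $CAC=A$ for a conjugation preserving $P(\cH)$ with a sign-modified one to kill the off-diagonal blocks, and then apply Lemma \ref{I} separately on $P(\cH)$ and $(I-P)(\cH)$. The paper does the sign trick in coordinates, using the conjugation associated with $\{i^{1-\delta_{(k,j)}}e_j, if_j\}$, which acts as $-1$ on every $f_j$ and so is of the form $C_1'\oplus(-C_2)$; your block formulation with $C_1\oplus(-C_2)$ is cleaner and also explicitly handles the one-dimensional case, which Lemma \ref{I} does not cover.
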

\begin{proof}
Applying Lemma \ref{projectionC}, we have $P \in \bigcap_{C \in \Lambda} \SC$.  It follows that 
\[
\Span\{P,I\} \subset \bigcap_{C \in \Lambda} \SC.
\]
To prove the reverse inclusion,  let $A \in \bigcap_{C \in \Lambda} \SC$. Since $\SC$ is closed under the involution $*$, we may assume that $A$ is a self-adjoint. 
Let $\cH_1=P(\cH)$ and $\cH_2=(I-P)(\cH)$. Let $\{e_j\}$ be an orthonormal basis of $\cH_1$ and $\{f_j\}$ be an orthonormal basis of $\cH_2$.  Fix $e_k \in \{e_j\}$. 
 Then there is $\alpha_j, \beta_j \in \mathbb{C}$ such that 
\begin{equation}\label{11}
Ae_k=\Sigma_{j} \alpha_j e_j+\Sigma_{j} \beta_j f_j. 
\end{equation}
Let $C$ be a conjugation associated with an orthonormal basis $\{ e_j, f_j \}$. Then $C \in \Lambda$. By \eqref{11}, we have 
\[
\Sigma_{j} \alpha_j e_j+\Sigma_{j} \beta_j f_j=Ae_k=CACe_k=CAe_k=\Sigma_{j} \overline{\alpha_j} e_j+\Sigma_{j} \overline{\beta_j }f_j. 
\]
Thus we have
\begin{equation}\label{112}
\alpha_j=\overline{\alpha_j}, \quad  \beta_j=\overline{\beta_j }, \quad \text{for all } j.
\end{equation}
Let $C'$ be a conjugation associated with a orthonormal basis $\{ (i)^{1-\delta_{(k,j)}}e_j, i f_j \}$. Then $C' \in \Lambda$. By \eqref{11}, we have 
\begin{equation*}
Ae_k=\alpha_k  e_k+\Sigma_{j\neq k} -i \alpha_j i e_j+\Sigma_{j} -i \beta_j if_j. 
\end{equation*}
Since $C'AC'=A$, we get 
\begin{multline*}
\Sigma_{j} \alpha_j e_j+\Sigma_{j} \beta_j f_j=Ae_k=C'Ae_k=C'(\alpha_k  e_k+\Sigma_{j\neq k} -i \alpha_j i e_j+\Sigma_{j} -i \beta_j if_j)\\=\overline{\alpha_k}e_k+\Sigma_{j\neq k} i \overline{\alpha_j }  i e_j +\Sigma_{j} i \overline{\beta_j } if_j=\overline{\alpha_k}e_k-\Sigma_{j\neq k} \overline{\alpha_j} e_j-\Sigma_{j} \overline{\beta_j }f_j.
\end{multline*}
This implies that 
\begin{equation}\label{113}
\alpha_k=\overline{\alpha_k}, \quad  \alpha_j=-\overline{\alpha_j } \quad \text{for all} \ j \neq k , \quad \beta_j=-\overline{\beta_j} \  \text{for all } j.
\end{equation}
By \eqref{112} and \eqref{113}, we $\alpha_j=0$ for all  $j \neq k$ and $\beta_j=0$  for any $ j $. By \eqref{11}, we get
$Ae_k= \alpha_k e_k$.
This implies that 
\begin{equation}\label{ch1}
A(\cH_1) \subset \cH_1.
\end{equation}
Thus $A|_{\cH_1} \in  B(\cH_1)_{sa}$. By Remark \ref{projC}, $\{ C|_{\cH_1} \mid C \in \Lambda \}$ is the set of all conjugations on $\cH_1$. It implies that for any conjugation $C$ on $\cH_1$, we have $CA|_{\cH_1}C=A|_{\cH_1}$. Applying  Lemma \ref{I} , there is $\alpha \in \mathbb{R}$ such that 
\[
A|_{\cH_1} =\alpha I|_{\cH_1}.
\] 
By a similar argument for $\cH_2$,  we have 
\begin{equation}\label{ch2}
A(\cH_2) \subset \cH_2.
\end{equation}
Hence $A|_{\cH_2} \in  B(\cH_2)_{sa}$. 
Note that $\{ C|_{\cH_2} \mid C \in \Lambda \}$ is the set of all conjugations on $\cH_2$ by Remark \ref{projC}. Hence for any conjugation $C$ on $\cH_2$, we have $CA|_{\cH_2}C=A|_{\cH_2}$. Applying  Lemma \ref{I} , there is $\beta \in \mathbb{R}$ such that 
\[
A|_{\cH_2} =\beta I|_{\cH_2}.
\]
Therefore, we get 
\[
A=\alpha P+\beta (I-P) \in \Span \{P, I\}.
\] 
\end{proof}
By an argument similar to that used in the proof of Lemma \ref{onep}, we obtain the following.
\begin{lemma}\label{twop}
    Let $P_1, P_2 \in \BH$ be projections such that $P_1(\cH) \perp P_2(\cH)$ and $\Lambda=\{ C \in \Conj \mid CP_1(\cH)=P_1(\cH), CP_2(\cH)=P_2(\cH)\}$. Then 
    \[
     \bigcap_{C \in \Lambda} \SC =\Span\{ P_1, P_2, I\}.
    \]
\end{lemma}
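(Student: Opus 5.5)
We will follow the proof of Lemma \ref{onep} closely, with three mutually orthogonal subspaces in place of two. Set $\cH_1=P_1(\cH)$, $\cH_2=P_2(\cH)$ and $\cH_3=(I-P_1-P_2)(\cH)$, so that $\cH=\cH_1\oplus\cH_2\oplus\cH_3$ (some summand may be $\{0\}$; those are simply ignored). If $C\in\Lambda$, then by Lemma \ref{projectionC} $C$ commutes with $P_1$ and $P_2$, hence with $I-P_1-P_2$. Applying Lemma \ref{projectionC} again, $C(\cH_3)=\cH_3$.

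\emph{Inclusion $\Span\{P_1,P_2,I\}\subset\bigcap_{C\in\Lambda}\SC$.} Lemma \ref{projectionC} gives $P_1,P_2\in\SC$ for every $C\in\Lambda$. Clearly $I\in\SC$. Since each $\SC$ is a linear subspace, the inclusion follows.

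\emph{Reverse inclusion.} Take $A\in\bigcap_{C\in\Lambda}\SC$. Since each $\SC$ is closed under $*$, we may write $A=\frac12(A+A^*)+i\,\frac{1}{2i}(A-A^*)$ and reduce to the case $A$ self-adjoint, so that $CAC=A$ for all $C\in\Lambda$. Fix orthonormal bases $\{e_j\}$, $\{f_j\}$, $\{g_j\}$ of $\cH_1$, $\cH_2$, $\cH_3$, and expand
\[
Ae_k=\sum_j\alpha_je_j+\sum_j\beta_jf_j+\sum_j\gamma_jg_j .
\]
Next we use two conjugations.
\begin{itemize}
\item[(a)] The conjugation associated with $\{e_j,f_j,g_j\}$ lies in $\Lambda$. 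The relation $CAC=A$ then forces all coefficients to be real.
\item[(b)] The conjugation associated with $\{i^{1-\delta_{jk}}e_j,\ if_j,\ ig_j\}$ also lies in $\Lambda$. The same computation as in Lemma \ref{onep} shows that $\alpha_j$ for $j\ne k$, and all $\beta_j,\gamma_j$, are purely imaginary.
\end{itemize}
Together these give $Ae_k=\alpha_ke_k$, hence $A(\cH_1)\subset\cH_1$. The same argument with $f_k$ and $g_k$ yields $A(\cH_2)\subset\cH_2$ and $A(\cH_3)\subset\cH_3$.

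\emph{Identifying the restrictions.} Each restriction $A|_{\cH_m}$ is self-adjoint. Restrictions of elements of $\Lambda$ exhaust all conjugations on $\cH_m$, by the analogue of Remark \ref{projC}: any triple of conjugations on the three summands glues to an element of $\Lambda$. Lemma \ref{I} therefore gives $A|_{\cH_m}=c_mI$ with $c_m\in\mathbb R$. Hence
\[
A=c_1P_1+c_2P_2+c_3(I-P_1-P_2)\in\Span\{P_1,P_2,I\}.
\]

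\emph{Main obstacle.} The only delicate point is checking that the conjugations used in (a) and (b), and the glued conjugations, genuinely belong to $\Lambda$. This is immediate from the fact that each is defined blockwise, preserving every $\cH_m$. The handling of zero-dimensional summands is trivial.
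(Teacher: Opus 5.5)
Your proposal is correct and matches the paper's approach: the paper gives no separate proof, saying only that the lemma follows by an argument similar to that of Lemma~\ref{onep}, and you carry out exactly that adaptation using the decomposition $\cH=P_1(\cH)\oplus P_2(\cH)\oplus(I-P_1-P_2)(\cH)$, the two test conjugations, and Lemma~\ref{I} on each block. The delicate points you flag are all handled correctly: the blockwise-defined conjugations lie in $\Lambda$, conjugations on the summands glue to an element of $\Lambda$, and zero- or one-dimensional summands cause no difficulty.
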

\begin{definition}
  For any $x \in \cH \setminus \{0\} $, 
we denote 
\[
\Lambda_{x}=\{ C \in \Conj \mid C(x) \in \mathbb{T}x\}.
\]  
\end{definition}

In particular, we have the following.
\begin{cor}\label{dim2o}
Let $x \in \cH$ with $\|x\|=1$. Then 
\[
\bigcap_{C \in \Lambda_x} \SC=\Span\{x\otimes x, I\}.
\]
\end{cor}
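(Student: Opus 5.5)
The plan is to reduce Corollary \ref{dim2o} to Lemma \ref{onep}, applied to the rank one projection $P=x\otimes x$ onto $\Span\{x\}$. Since $\|x\|=1$, $P$ is indeed a projection with $P(\cH)=\Span\{x\}$. Lemma \ref{onep} then gives
\[
\bigcap_{C \in \Lambda} \SC=\Span \{x\otimes x, I\},
\qquad \Lambda=\{ C \in \Conj \mid C(\Span\{x\})=\Span\{x\}\}.
\]
So it suffices to prove that $\Lambda=\Lambda_x$.

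For the inclusion $\Lambda_x\subset\Lambda$, let $C\in\Lambda_x$, so $Cx=\mu x$ with $\mu\in\mathbb{T}$. By antilinearity, $C(\alpha x)=\overline{\alpha}\mu x\in\Span\{x\}$ for every $\alpha\in\bC$. Hence $C(\Span\{x\})\subset\Span\{x\}$. Applying $C$ again and using $C^2=I$ gives $\Span\{x\}\subset C(\Span\{x\})$, so equality holds.

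For the inclusion $\Lambda\subset\Lambda_x$, let $C\in\Lambda$. Then $Cx=\mu x$ for some $\mu\in\bC$. Property (iii) of a conjugation gives $\|Cx\|^2=\la Cx,Cx\ra=\la x,x\ra=1$, so $|\mu|=1$ and therefore $C\in\Lambda_x$.

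With $\Lambda=\Lambda_x$ established, the corollary follows immediately from Lemma \ref{onep}. The only point requiring any care is that the isometric property (iii) forces the scalar $\mu$ to have modulus one, which is exactly what makes the two conditions agree. Beyond that, no real obstacle is expected.
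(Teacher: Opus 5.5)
Your proposal is correct and is essentially the paper's argument: the corollary is stated as a direct specialization of Lemma \ref{onep} to the rank one projection $P=x\otimes x$. You also supply the detail the paper leaves implicit, namely that $\Lambda_x$ coincides with the set of conjugations leaving $\Span\{x\}$ invariant, since $\|Cx\|=\|x\|$ forces the scalar to lie in $\mathbb{T}$.
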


\begin{lemma}\label{dim2t}
Let $\Lambda \subset \Conj$. Then  $\dim (\bigcap_{C \in \Lambda} \SC)=2$  if and only if 
there is a projection $P \in \BH$ with $P \neq I, 0$ such that 
$\bigcap_{C \in \Lambda} \SC=\Span \{P, I\}$.
\end{lemma}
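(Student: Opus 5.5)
The converse direction is immediate: if $\bigcap_{C\in\Lambda}\SC=\Span\{P,I\}$ with $P\neq 0,I$, then $P$ and $I$ are linearly independent, so the dimension is $2$. For the forward direction, set $\mathcal{A}=\bigcap_{C\in\Lambda}\SC$. This is a complex linear subspace that is closed under $*$, because each $\SC$ is. By Lemma \ref{dim1}, the assumption $\dim\mathcal{A}=2$ excludes the case in which $\Lambda$ gives only the scalars. I first want to show $I\in\mathcal{A}$, which is clear since $CIC=I=I^*$ for every conjugation $C$.

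Next I choose a self-adjoint $A\in\mathcal{A}$ with $A\notin\Span\{I\}$. This is possible because $\mathcal{A}$ is $*$-closed: for $B\in\mathcal{A}\setminus\Span\{I\}$, one of $B+B^*$ and $i(B-B^*)$ is self-adjoint, lies in $\mathcal{A}$, and is not a multiple of $I$. Then $\mathcal{A}=\Span\{A,I\}$.

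The key step is to show that $\mathcal{A}$ is closed under multiplication, so that polynomials in $A$ stay in $\mathcal{A}$. For self-adjoint $A\in\SC$ we have $CAC=A$, and hence $CA^2C=(CAC)(CAC)=A^2$, so $A^2\in\SC$ for every $C\in\Lambda$. Thus $A^2\in\mathcal{A}=\Span\{A,I\}$, which gives $A^2=\alpha A+\beta I$ for some scalars. Since $A$ is self-adjoint and not scalar, the minimal polynomial of $A$ has degree exactly $2$ and real roots $\lambda_1\neq\lambda_2$. So $\sigma(A)=\{\lambda_1,\lambda_2\}$, and $A=\lambda_1P+\lambda_2(I-P)$, where $P$ is the spectral projection onto $\ker(A-\lambda_1 I)$. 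Both eigenvalues actually occur, so $P\neq0,I$. Then $P=(A-\lambda_2 I)/(\lambda_1-\lambda_2)\in\Span\{A,I\}=\mathcal{A}$, and since $P$ and $I$ are independent, $\mathcal{A}=\Span\{P,I\}$.

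The only point needing care is obtaining the quadratic relation, which rests on $*$-closedness and on $C$ being an involution. I do not expect a serious obstacle; once the quadratic relation is in hand, the argument is finite-dimensional-style spectral reasoning that works for a bounded self-adjoint operator with two-point spectrum.
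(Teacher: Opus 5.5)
Your proof is correct, but it takes a different and more algebraic route than the paper's. Both arguments start the same way, by picking a self-adjoint $A$ in the intersection that is not a multiple of $I$.

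\textbf{The paper's route.} The paper rules out $|\sigma(A)|\ge 3$ analytically. It uses Urysohn's lemma and the continuous functional calculus to build three nonzero positive operators $f_i(A)$ with pairwise zero products. These are linearly independent, and the paper asserts without justification that they lie in $\bigcap_{C\in\Lambda}\mathcal{S}_C$. Having obtained $A=\lambda_1P+\lambda_2(I-P)$, it then needs a separate step, uniqueness of the spectral decomposition, to conclude $CPC=P$ for every $C\in\Lambda$.

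\textbf{Your route.} You only observe that $A^2$ stays in the intersection, since $CA^2C=(CAC)^2=A^2=(A^2)^*$. The dimension count then forces $A^2=\alpha A+\beta I$. This single relation gives the two-point spectrum and also the formula $P=(A-\lambda_2I)/(\lambda_1-\lambda_2)$. So $P\in\Span\{A,I\}$ comes for free, and no check of $CPC=P$ is needed.

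\textbf{Comparison.} Your version is shorter and avoids continuity and functional calculus entirely. It also supplies the step the paper leaves implicit: that $f_i(A)$ is again $C$-symmetric is your observation, extended to real polynomials and then to continuous real functions by a limit. The paper's version phrases the bound as ``number of spectral points $\le$ dimension.'' Your method gives the same kind of bound by using higher powers of $A$.

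\textbf{Details to write out.} The points you gloss over are routine but worth stating.
\begin{itemize}
\item Taking adjoints of $A^2=\alpha A+\beta I$ and using the independence of $A$ and $I$ gives $\alpha,\beta\in\mathbb{R}$.
\item The roots are real, because $\sigma(A)$ is a nonempty subset of $\mathbb{R}$ contained in the root set.
\item A repeated root $\lambda$ is impossible: $A-\lambda I$ would be self-adjoint with square zero, hence zero.
\end{itemize}
With these, $(A-\lambda_1I)(A-\lambda_2I)=0$, and $P$ is a self-adjoint idempotent, i.e.\ a projection. It is neither $0$ nor $I$ because $A$ is not scalar.
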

\begin{proof}
Suppose that $\dim (\bigcap_{C \in \Lambda} \SC)=2$. By Lemma \ref{I}, $I \in \bigcap_{C \in \Lambda} \SC$. There is $A \in \bigcap_{C \in \Lambda} \SC$ such that $A$ is linearly independent of $I$. Since $\bigcap_{C \in \Lambda} \SC$ is closed under the involution $*$, we may assume that $A$ is a self-adjoint.
Suppose that $|\sigma(A)| \ge 3$. Let $\lambda_1, \lambda_2, \lambda_3 \in \sigma(A)$ be pairwise distinct. Since $\sigma(A)$ is a compact Hausdorff space, there are mutually disjoint open neighborhoods $U_i$ of $\lambda_i$ for $i=1,2,3$. By Urysohn's Lemma, for each $i=1,2,3$, there is a continuous function $f_i : \sigma(A) \to \mathbb{R}$ such that $f_i(\lambda_i)=1$,  $f_i(z)=0$ for all $z \in \sigma(A) \setminus U_i$ and $f_i(z) \ge 0$ for all $z \in \sigma(A)$. Note that $f_1f_2=f_2f_3=f_3f_1=0$. Applying the functional calculus we define a positive element $B_i=f_i(A) \in B(\cH)$ for each $i=1,2,3$. We get $B_1B_2=B_2B_3=B_3B_1=0$.  Suppose that 
\[
\alpha_1 B_1+ \alpha_2B_2+\alpha_3 B_3=0, \quad \text{for some} \ \alpha_1, \alpha_2, \alpha_3 \in \mathbb{C}.
\]
Multiplying both sides by $B_i$, we get 
\[
\alpha_iB_i^2=0.
\] 
By $\|B_i^2\|=\|B_i\|^2 \neq 0$, we obtain that $\alpha_i=0$. Hence $B_1$, $B_2$, and $B_3$ are linearly independent. On the other hand,  since $A \in \bigcap_{C \in \Lambda} \SC$,  we have $B_i \in \bigcap_{C \in \Lambda} \SC$.  This leads to a contradiction with $\dim (\bigcap_{C \in \Lambda} \SC)=2 $. Hence we have $|\sigma(A)| \le 2$. Therefore there are $\lambda_1, \lambda_2 \in \mathbb{R}$, a projection $P$ such that 
\begin{equation*}\label{SA}
A=\lambda_1 P+ \lambda_2(I-P)=(\lambda_1-\lambda_2)P+ \lambda_2I.
\end{equation*}
As $A$ is linearly independent of $I$, we get $P \neq I, 0$.  For any $C \in \Lambda$, we have
\[
\lambda_1 CPC+ \lambda_2(I-CPC)=CAC=A=\lambda_1 P+ \lambda_2(I-P).
\]
It is easy to check that $CPC$ is a projection. By the uniqueness of the spectrum decomposition of $A$, we have $CPC=P$ for any $C \in \Lambda$. Thus $P \in \bigcap_{C \in \Lambda} \SC$. 
Since $\dim(\bigcap_{C \in \Lambda} \SC)=2$, we have 
\[
\bigcap_{C \in \Lambda} \SC=\Span \{P, I\}.
\]
To prove the converse implication, suppose that $\bigcap_{C \in \Lambda} \SC=\Span \{P, I\}$ with $P \neq I, 0$. Then  it is clear that  $\dim (\bigcap_{C \in \Lambda} \SC)=2$.
\end{proof}

\begin{lemma}\label{I2}
Let $\cH$ be a separable Hilbert space. Let $T: \BH \to \BH$ be a continuous bijective linear map  and $\psi: \Conj \to \Conj$ be a bijection satisfying $T(\SC) = {\mathcal{S}_{\psi(C)}}$. There is $c \in \mathbb{C} \setminus \{0\} $ such that $T(I)=cI$.
\end{lemma}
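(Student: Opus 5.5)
The plan is to show that $T$ maps the intersection of all $\SC$ onto itself, and then to identify that intersection as $\Span\{I\}$.

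First we compute $\bigcap_{C\in\Conj}\SC$. It contains $I$. Conversely, let $A$ lie in every $\SC$. Since each $\SC$ is closed under the involution, $A^*$ also lies in every $\SC$. Hence $\operatorname{Re}A=(A+A^*)/2$ and $\operatorname{Im}A=(A-A^*)/(2i)$ are self-adjoint elements of every $\SC$. By Lemma \ref{I}, each of them is a real multiple of $I$. Therefore
\[
\bigcap_{C\in\Conj}\SC=\Span\{I\}.
\]

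Next we apply $T$. Because $T$ is bijective, it commutes with intersections of images:
\[
T\Big(\bigcap_{C}\SC\Big)=\bigcap_{C}T(\SC).
\]
Injectivity of $T$ gives the inclusion ``$\supseteq$''; the inclusion ``$\subseteq$'' is trivial. By hypothesis $T(\SC)=\mathcal{S}_{\psi(C)}$, so the right-hand side equals $\bigcap_{C}\mathcal{S}_{\psi(C)}$. Since $\psi$ is a bijection of $\Conj$, as $C$ runs over $\Conj$ so does $\psi(C)$. Hence this intersection is again $\bigcap_{D\in\Conj}\mathcal{S}_D=\Span\{I\}$.

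Combining the two steps gives $T(\Span\{I\})=\Span\{I\}$. Thus $T(I)=cI$ for some $c\in\bC$, and $c\neq0$ because $T$ is injective. Continuity of $T$ is not needed here. The only step needing care is the injectivity argument that lets the intersection pass through $T$, together with the surjectivity of $\psi$.
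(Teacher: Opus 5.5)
Your proof is correct and follows the same route as the paper. You identify $\bigcap_{C\in\Conj}\SC=\Span\{I\}$ via Lemma \ref{I}, then use the bijectivity of $\psi$ to get $T(I)\in\bigcap_{C}\mathcal{S}_{\psi(C)}=\Span\{I\}$, with $c\neq 0$ by injectivity; you are also slightly more careful than the paper in making explicit the real/imaginary-part reduction needed to apply Lemma \ref{I}, which is stated only for self-adjoint operators, though only the trivial inclusion $T(\bigcap_C\SC)\subseteq\bigcap_C T(\SC)$ is actually needed.
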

\begin{proof}
Since $\psi$ is a bijection,  Lemma \ref{I} shows that
\[
T(I) \in T( \bigcap_{C \in \Conj} \SC)=  \bigcap_{C \in \Conj} \mathcal{S_{\psi(C)}}=\Span\{I\}.
\]
As $T$ is a bijective linear map,  there is $c \in \mathbb{C} \setminus \{0\} $ such that $T(I)=cI$. 
\end{proof}

\begin{lemma}\cite[Lemma 1]{AOS}\label{L1}
Let $\{e_n\}$ be an orthonormal basis of $\cH$. Then 
$D_{\{e_n\}}=\bigcap_{C \in \Lambda_{\{e_n\}}} \mathcal{S}_{C}$.
\end{lemma}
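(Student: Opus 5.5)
The plan is to prove the two inclusions directly, computing matrix entries with respect to $\{e_n\}$. Here $\Lambda_{\{e_n\}}$ is read as the set of conjugations $C$ with $Ce_n\in\mathbb{T}e_n$ for every $n$, i.e.\ $\Lambda_{\{e_n\}}=\bigcap_n \Lambda_{e_n}$. The basic observation is that, for any choice $t=(t_n)_n$ with $t_n\in\mathbb{T}$, the map
\[
C_t\Big(\sum_n c_n e_n\Big)=\sum_n \overline{c_n}\,t_n e_n
\]
is a conjugation. It is antilinear. It satisfies $C_t^2=I$, since $C_t(\overline{c_n}t_ne_n)=c_n\overline{t_n}t_ne_n=c_ne_n$. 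It satisfies $\la C_tx,C_ty\ra=\sum_n\overline{x_n}y_n=\la y,x\ra$. Conversely, every $C\in\Lambda_{\{e_n\}}$ is of this form, because $C$ is antilinear, continuous (it is isometric) and $Ce_n=t_ne_n$.

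For the inclusion $\mathcal{D}_{\{e_n\}}\subset\bigcap_{C\in\Lambda_{\{e_n\}}}\SC$, let $De_n=d_ne_n$ and $C=C_t$. Then
\[
CDCe_n=CD(t_ne_n)=C(t_nd_ne_n)=\overline{t_nd_n}\,t_ne_n=\overline{d_n}e_n=D^*e_n .
\]
Since both $CDC$ and $D^*$ are bounded linear operators that agree on a basis, $CDC=D^*$.

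For the reverse inclusion, take $A$ in the intersection and write $a_{jk}=\la Ae_k,e_j\ra$. Evaluating $C_tAC_te_k=A^*e_k$ and comparing the coefficients of $e_j$ gives
\[
\overline{t_k}\,t_j\,\overline{a_{jk}}=\overline{a_{kj}}\quad\text{for all } j,k .
\]
First take all $t_n=1$; this gives $a_{jk}=a_{kj}$. Next fix $j\neq k$ and choose $t_j=i$ and all other $t_n=1$; this gives $i\,\overline{a_{jk}}=\overline{a_{kj}}=\overline{a_{jk}}$. Hence $a_{jk}=0$. Therefore $Ae_k=a_{kk}e_k$ for every $k$, so $A$ is $\{e_n\}$-diagonal.

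The only step requiring care is justifying that each $C_t$ is a genuine conjugation on the possibly infinite-dimensional space and that coefficients can be compared termwise. Both are routine, using boundedness of $A$ and continuity of $C_t$. I expect no real obstacle; the computation mirrors the one in the proof of Lemma \ref{onep}.
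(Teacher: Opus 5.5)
The paper gives no proof of this lemma; it is quoted from [AOS]. Under your reading of $\Lambda_{\{e_n\}}$ (all conjugations with $Ce_n\in\mathbb{T}e_n$ for every $n$), your argument is correct and complete. Both inclusions are the same entrywise computation the paper carries out in the proof of Lemma~\ref{onep}.

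Your reading is not the one the paper actually uses, though. In Section 4 the paper defines $\Lambda_{\{e_n\}}=\{C^{\rho}_{\{e_i\}} \mid \rho:\mathcal{I}_N\to\{\pm1\}\}$, which contains only the conjugations with $Ce_j=\pm e_j$. The lemma is applied in this form in the proof of $(1)\Rightarrow(2)$ of Theorem~\ref{main}, together with Lemma~\ref{commutative}. That proof needs the elements of $\Lambda_{\{e_n\}}$ to commute pairwise, which fails for general phases: $C_tC_s=C_sC_t$ if and only if $t_n=\pm s_n$ for all $n$.

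Because the paper's set is smaller than yours, its intersection $\bigcap_{C}\mathcal{S}_C$ is a priori larger. So your equality does not imply the paper's version. Moreover, your reverse inclusion uses $t_j=i$, which is not an admissible choice there.

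The fix is immediate. Take $t_j=-1$ and $t_n=1$ for $n\neq j$. Your identity $\overline{t_k}\,t_j\,\overline{a_{jk}}=\overline{a_{kj}}$, combined with $a_{jk}=a_{kj}$ from the choice $t\equiv 1$, then gives $-\overline{a_{jk}}=\overline{a_{jk}}$, so $a_{jk}=0$. The forward inclusion carries over verbatim because $\{\pm1\}\subset\mathbb{T}$. With this one change your proof establishes the lemma under both readings.
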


\begin{lemma}\label{I3}
Let $\cH$ be a separable Hilbert space. Let $T: \BH \to \BH$ be a continuous bijective linear map  and  $\varphi: \mathcal{E}(\cH) \to \mathcal{E}(\cH)$ be a bijection satisfying $T(\mathcal{D}_{\{e_n\}}) = \mathcal{D}_{\varphi(\{e_n\})}$. There is $c \in \mathbb{C} \setminus \{0\} $ such that $T(I)=cI$.
\end{lemma}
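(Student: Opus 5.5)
The plan is to mimic the proof of Lemma \ref{I2}, with diagonal algebras in place of the sets $\SC$. The key identity is
\[
\bigcap_{\{e_n\} \in \mathcal{E}(\cH)} \mathcal{D}_{\{e_n\}}=\Span\{I\}.
\]
One inclusion is trivial, since $I$ is diagonal with respect to every orthonormal basis.

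For the reverse inclusion, let $A$ be diagonal with respect to every orthonormal basis. Then every nonzero vector $x$ is an eigenvector of $A$, because $x/\|x\|$ can be completed to an orthonormal basis. An operator for which every nonzero vector is an eigenvector is a scalar multiple of the identity. To see this, take linearly independent $x,y$ with $Ax=\alpha x$ and $Ay=\beta y$. Then $A(x+y)=\gamma(x+y)$, and comparing coefficients gives $\alpha=\gamma=\beta$. Hence $A\in\Span\{I\}$. This argument works for any $\dim\cH\ge 2$ and needs neither self-adjointness nor Lemma \ref{I}. Alternatively, one could combine Lemma \ref{L1} with Lemma \ref{I} applied to $A$ and $A^*$, but the direct eigenvector argument is simpler.

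Next, since $\varphi$ is a bijection of $\mathcal{E}(\cH)$ and $T$ is bijective, $T$ commutes with intersections:
\[
T\Bigl(\bigcap_{\{e_n\}} \mathcal{D}_{\{e_n\}}\Bigr)=\bigcap_{\{e_n\}} T(\mathcal{D}_{\{e_n\}})=\bigcap_{\{e_n\}}\mathcal{D}_{\varphi(\{e_n\})}=\bigcap_{\{f_n\}}\mathcal{D}_{\{f_n\}}.
\]
Here injectivity of $T$ justifies pushing the intersection through $T$, and surjectivity of $\varphi$ justifies the final re-indexing. Therefore $T(\Span\{I\})=\Span\{I\}$. Because $T$ is injective, $T(I)\neq 0$, so $T(I)=cI$ with $c\neq 0$.

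No step is a real obstacle. The only point needing care is that the intersection over all bases is exactly the scalars, and the eigenvector argument settles it.
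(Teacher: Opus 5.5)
Your proof is correct, and its skeleton is the paper's. You show $\bigcap_{\{e_n\}\in\mathcal{E}(\cH)}\mathcal{D}_{\{e_n\}}=\Span\{I\}$, then push the intersection through $T$, using injectivity of $T$ and surjectivity of $\varphi$ for the re-indexing.

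The only difference is how you justify that identity. The paper gets it indirectly, in three steps:
\begin{enumerate}
\item It uses Lemma \ref{L1} to rewrite the intersection of all diagonal algebras as $\bigcap_{C\in\Conj}\SC$.
\item This tacitly needs Lemma \ref{GP1}, so that every conjugation lies in some $\Lambda_{\{e_n\}}$.
\item It then invokes Lemma \ref{I}. That lemma is stated only for self-adjoint operators, so one must also split into real and imaginary parts, using that each $\SC$ is closed under $*$.
\end{enumerate}
Your eigenvector argument is more elementary and self-contained. It needs no conjugations, no self-adjointness reduction, and no case split on $\dim\cH$, and it works in every dimension. The paper's route gains only by reusing machinery already built for Lemma \ref{I2}, which makes the two lemmas formally parallel.
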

\begin{proof}
By Lemma \ref{L1}, we have $\bigcap_{\{e_n\} \in \mathcal{E}(\cH)} D_{\{e_n\}}= \bigcap_{C \in \Conj} \SC=\Span\{I\}$. 
Since $\varphi$ is a bijection,  we have
\begin{multline*}
   T(I) \in T(\bigcap_{\{e_n\} \in \mathcal{E}(\cH)} D_{\{e_n\}})=  \bigcap_{\{e_n\} \in \mathcal{E}(\cH)} T(D_{\{e_n\}})\\= \bigcap_{\{e_n\} \in \mathcal{E}(\cH)} D_{\varphi(\{e_n\})}=\Span\{I\}. 
\end{multline*}

As $T$ is a bijective linear map,  there is $c \in \mathbb{C} \setminus \{0\} $ such that $T(I)=cI$. 
\end{proof}

At the end of this section, we introduce an equivalence relation $\sim$ on $\mathcal{E}(\cH)$ by
\[
\{e_n\} \sim \{f_n\} 
\quad\Longleftrightarrow\quad
D_{\{e_n\}}=D_{\{f_n\}}
\]
for $\{e_n\},\{f_n\}\in \mathcal{E}(H)$. The equivalence class of $\{e_n\} \in \mathcal{E}(\cH)$ is denoted by $[e_n]$, and we write $\mathcal{E}(\cH)/ \sim$ for the corresponding quotient space. Let $\mathfrak{S}_N$ be the group of all permutations of  $\mathcal{I}_N$. We emphasize that, throughout this paper, an orthonormal basis is regarded as an ordered sequence. Thus, for an orthonormal basis $\{e_n\}$ and a permutation $\sigma \in \mathfrak{S}_N$, the sequences $\{e_{\sigma(n)}\}$ are regarded as distinct elements of $\mathcal{E}(\mathcal{H})$ unless $\sigma$ is the identity permutation. Hence, $\{e_n\} \sim \{f_n\} $ if and only if there exist a permutation $\sigma\in \mathfrak{S}_N$ and $\{\alpha_n\} \in \mathbb T^{N}$ such that $f_n=\alpha_n e_{\sigma(n)}$ for all $n$.  Let $q:\mathcal{E}(\cH) \to \mathcal{E}(\cH)/ \sim $ be the canonical quotient map. Fix a section $s:\mathcal{E}(\cH)/ \sim  \to \mathcal{E}(\cH)$ of $q$. 
\begin{lemma}\label{lift}
Suppose that $\mathcal{F}:\mathcal{E}(\cH)/ \sim \to \mathcal{E}(\cH)/ \sim$
is a bijection. Then there is a bijection $\varphi:\mathcal{E}(\cH) \to \mathcal{E}(\cH)$ such that $q\circ \varphi=\mathcal{F} \circ q$.
\end{lemma}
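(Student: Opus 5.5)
The plan is to lift $\mathcal{F}$ class by class, using the fact that all equivalence classes have the same cardinality and can be matched by explicit bijections. Fix $\{e_n\}\in\mathcal{E}(\cH)$. By the description preceding the lemma, the class $[e_n]$ equals
\[
\{\{\alpha_n e_{\sigma(n)}\} \mid \sigma\in\mathfrak{S}_N,\ \{\alpha_n\}\in\mathbb T^N\}.
\]
The first step is to check that the map
\[
\Phi_{\{e_n\}}:\mathfrak{S}_N\times\mathbb T^N\to[e_n], \qquad (\sigma,\{\alpha_n\})\mapsto\{\alpha_n e_{\sigma(n)}\},
\]
is a bijection. Surjectivity is the description above. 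For injectivity, suppose $\alpha_n e_{\sigma(n)}=\beta_n e_{\tau(n)}$ for all $n$. Orthonormality forces $\sigma(n)=\tau(n)$, since distinct basis vectors are linearly independent. Then $\alpha_n=\beta_n$ because $e_{\sigma(n)}\neq0$.

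Next, use the fixed section $s$ to choose a reference point in each class. For $\xi\in\mathcal{E}(\cH)/\sim$ write $s(\xi)=\{e^{\xi}_n\}$, and set $\Phi_\xi=\Phi_{s(\xi)}$, which is a bijection from $\mathfrak{S}_N\times\mathbb T^N$ onto $q^{-1}(\xi)$. Define $\varphi$ on each fibre by
\[
\varphi|_{q^{-1}(\xi)}=\Phi_{\mathcal{F}(\xi)}\circ\Phi_\xi^{-1}.
\]
Concretely, $\varphi(\{\alpha_n e^{\xi}_{\sigma(n)}\})=\{\alpha_n e^{\mathcal{F}(\xi)}_{\sigma(n)}\}$. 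The fibres $q^{-1}(\xi)$ partition $\mathcal{E}(\cH)$, so $\varphi$ is well defined. By construction it maps $q^{-1}(\xi)$ bijectively onto $q^{-1}(\mathcal{F}(\xi))$, which gives $q\circ\varphi=\mathcal{F}\circ q$.

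Finally, $\varphi$ is a bijection of $\mathcal{E}(\cH)$. Since $\mathcal{F}$ is a bijection on the index set of the partition, the fibres are permuted. Each restriction of $\varphi$ is a bijection between corresponding fibres. Gluing these restrictions gives a global bijection, with inverse built the same way from $\mathcal{F}^{-1}$.

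The only point needing care is the injectivity of $\Phi_{\{e_n\}}$, i.e. that the pair $(\sigma,\{\alpha_n\})$ is uniquely determined by the ordered basis. Everything else is set-theoretic bookkeeping. If one wished to avoid this explicit parametrization, the fallback would be a cardinality argument: all fibres have cardinality $|\mathfrak{S}_N\times\mathbb T^N|$, and one would choose a bijection between $q^{-1}(\xi)$ and $q^{-1}(\mathcal{F}(\xi))$ for each $\xi$ using the axiom of choice. The explicit construction above avoids that choice beyond the already fixed section $s$.
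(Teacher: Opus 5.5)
Your proof is correct and is essentially the paper's argument. Your map $\Phi_{s(\xi)}$ is exactly the orbit map $g\mapsto g\cdot s(\xi)$ for the paper's action of $\mathbb T^N\rtimes\mathfrak{S}_N$; its bijectivity is the paper's assertion that this action is free and transitive on each class; and your $\varphi$ coincides with the paper's $\varphi(g\cdot s(x))=g\cdot s(\mathcal{F}(x))$. The only difference is that you parametrize each class by the bare set $\mathfrak{S}_N\times\mathbb T^N$ and verify injectivity explicitly, instead of invoking a group action whose axioms the paper leaves unchecked, which makes your version slightly cleaner.
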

\begin{proof}
We define a left action of  the group $\mathbb T^{N}\rtimes \mathfrak{S}_N$ on $\mathcal{E}(\cH)$ by
\[
(\{\alpha_n\}, \sigma)   \cdot\{e_n\}=\{\alpha_n e_{\sigma(n)}\}, 
\]
for $(\{\alpha_n\}, \sigma) \in \mathbb T^{N}\rtimes \mathfrak{S}_N$ and $\{e_n\} \in \mathcal{E}(\cH)$. Then the group $\mathbb T^{N}\rtimes \mathfrak{S}_N$ acts freely and transitively on each equivalence class. Hence, every $\{e_n\} \in \mathcal{E}(\cH)$ can be written uniquely in the form
\[
\{e_n\}=g\cdot s(q(\{e_n\})).
\]
Consequently, we may identify $\mathcal{E}(\cH)$ with $(\mathcal{E}(\cH)/\sim) \times (\mathbb T^{N}\rtimes \mathfrak{S}_N)$.
Under this identification, define $\varphi:\mathcal{E}(\cH) \to  \mathcal{E}(\cH)$
by
\[
\varphi \bigl(g\cdot s(x)\bigr)=g\cdot s(F(x)), \qquad x \in\mathcal{E}(\cH)/\sim, \quad  g\in \mathbb T^{N}\rtimes \mathfrak{S}_N.
\]
Then $\varphi$ is a bijection and $q\circ \varphi=\mathcal{F} \circ q$.  
\end{proof}

\section{Proof of Theorem \ref{main0}} 
Assume that $\dim \cH=2$ in this section.
We denote by
\[
U(2)=\{U\in M_2(\mathbb C):U^*U=I\}
\]
the unitary group of $2\times2$ complex matrices, and by
\[
SO(3)=\{R\in M_3(\mathbb R):R^TR=I,\ \det R=1\}
\]
the special orthogonal group of $\mathbb R^3$. 
\begin{proposition}{\cite[Proposition 5.5]{Land}}\label{Bloch_representation}
For each $U\in U(2)$, there exists a unique orthogonal transformation
$R_U\in SO(3)$ such that
\[
U(x\cdot \sigma)U^*=(R_Ux)\cdot\sigma
\qquad x \in\mathbb R^3,
\]
where
\[
x\cdot\sigma=\sum_{j=1}^3x_j\sigma_j.
\]
Moreover, the map $U \mapsto R_U$ is a group homomorphism from $U(2)$ onto $SO(3)$.  
\end{proposition}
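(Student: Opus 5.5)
The statement is the standard fact that the adjoint action of $U(2)$ on traceless matrices realizes $SO(3)$. My plan has three parts: existence of $R_U$ with orthogonality, the determinant and homomorphism properties, and uniqueness and surjectivity.

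\textbf{Existence and orthogonality.} Fix $U\in U(2)$ and consider $\Phi_U(X)=UXU^*$ on the real vector space $V$ of traceless self-adjoint $2\times 2$ matrices. By the uniqueness of the decomposition $A=\frac12(aI+x\cdot\sigma)$, the space $V$ is exactly $\{x\cdot\sigma : x\in\mathbb{R}^3\}$, since $\sigma_1,\sigma_2,\sigma_3$ are self-adjoint and traceless. Conjugation by $U$ preserves self-adjointness and trace, so $\Phi_U(V)\subset V$. Hence there is a unique real linear map $R_U:\mathbb{R}^3\to\mathbb{R}^3$ with $U(x\cdot\sigma)U^*=(R_Ux)\cdot\sigma$. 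The Pauli relations $\sigma_j\sigma_k+\sigma_k\sigma_j=2\delta_{jk}I$ give $(x\cdot\sigma)^2=|x|^2I$, and therefore $\det(x\cdot\sigma)=-|x|^2$. Since $\det$ is invariant under unitary conjugation, $|R_Ux|=|x|$, so $R_U\in O(3)$.

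\textbf{Homomorphism and determinant.} Since $\Phi_{UV}=\Phi_U\Phi_V$ and the coefficient vector $x$ is unique, $R_{UV}=R_UR_V$. This makes $U\mapsto R_U$ a continuous homomorphism into $O(3)$. The group $U(2)$ is connected, because every unitary is $W\operatorname{diag}(e^{i\theta_1},e^{i\theta_2})W^*$ and this can be joined to $I$ by a path. Also $\det R_I=1$, so $\det R_U=1$ for all $U$, and $R_U\in SO(3)$. Uniqueness of $R_U$ follows from the linear independence of the $\sigma_j$.

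\textbf{Surjectivity.} This is the only substantive step. I would check that every rotation about each coordinate axis is attained. For example, $U=\operatorname{diag}(e^{-i\theta/2},e^{i\theta/2})$ fixes $\sigma_3$ and rotates the $(\sigma_1,\sigma_2)$-plane by $\theta$, via a direct $2\times2$ computation. The analogous choices $U=\exp(-i\theta\sigma_1/2)$ and $U=\exp(-i\theta\sigma_2/2)$ give rotations about the other two axes. Every element of $SO(3)$ is a product of such rotations (Euler angles $R_3(\alpha)R_2(\beta)R_3(\gamma)$), so the homomorphism property yields surjectivity. Since the statement is quoted from \cite{Land}, one may alternatively just cite it.
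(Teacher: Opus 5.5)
Your proof is correct. The paper gives no argument of its own for this proposition: it quotes the statement from \cite{Land} and uses it only in Lemma~\ref{plane}. There, what matters is surjectivity of $U\mapsto R_U$: for an arbitrary two-dimensional subspace $P\subset\mathbb{R}^3$, one needs some $R=R_U$ carrying $P_K$ onto $P$. So there is no competing route to compare with. What you have written is the standard self-contained proof, and it supplies exactly the facts the paper relies on: existence, uniqueness, the homomorphism property, and surjectivity onto $SO(3)$.

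Two small remarks.

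\textbf{Continuity.} Your connectedness argument for $\det R_U=1$ relies on continuity of $U\mapsto R_U$, which you leave implicit. It follows at once from $(R_U)_{jk}=\tfrac12\operatorname{tr}(\sigma_j U\sigma_k U^*)$, which in turn comes from $\operatorname{tr}(\sigma_j\sigma_k)=2\delta_{jk}$.

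\textbf{Avoiding topology.} You can bypass connectedness entirely. Write $U=WDW^*$ with $D=\operatorname{diag}(e^{i\theta_1},e^{i\theta_2})$. The homomorphism property gives $R_U=R_WR_DR_W^{-1}$, so $\det R_U=\det R_D$. Your own diagonal computation shows that $R_D$ is the rotation by $\theta_2-\theta_1$ about the third axis, which has determinant $1$.

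Your surjectivity step via the $R_3R_2R_3$ Euler decomposition is fine. The sign conventions of the individual axis rotations are irrelevant, since $\theta$ ranges over all of $\mathbb{R}$.
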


\begin{lemma}\label{plane}
We have
\begin{multline*}
    \{\SC \mid C \in \Conj\}\\=\{\Span\{I, P\cdot\sigma\} \mid P \text{ is  a two-dimensional subspace of $\mathbb{R}^3$}\}
\end{multline*}
\end{lemma}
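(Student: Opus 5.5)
Since the statement is an equality between two families of subspaces of $M_2(\mathbb{C})$, I will show each inclusion. First I reduce to a standard conjugation. By Lemma \ref{GP1}, every $C \in \Conj$ has the form $C = UJ_0U^*$, where $J_0$ is entrywise complex conjugation in the standard basis and $U\in U(2)$: take $U$ to map the standard basis to an orthonormal basis fixed by $C$. Then $A \in \SC$ if and only if $U^*AU \in \mathcal{S}_{J_0}$, since $J_0 U^*AU J_0 = U^* C A C U$. Hence $\SC = U\mathcal{S}_{J_0}U^*$.

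Next I compute $\mathcal{S}_{J_0}$ directly. The condition $J_0AJ_0 = A^*$ means $\overline{A} = A^*$, that is, $A^T = A$. In Pauli coordinates, $\sigma_1^T=\sigma_1$, $\sigma_3^T=\sigma_3$ and $\sigma_2^T=-\sigma_2$. So $A=\frac12(aI+x\cdot\sigma)$ is symmetric exactly when $x_2=0$. Therefore $\mathcal{S}_{J_0}=\Span\{I, P_0\cdot\sigma\}$, where $P_0 = \{x\in\mathbb{R}^3 : x_2=0\}$. Here $P\cdot\sigma$ denotes $\{x\cdot \sigma : x \in P\}$, and its complex span is what appears in the statement.

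For the inclusion $\subset$, Proposition \ref{Bloch_representation} gives
\[
\SC = U\mathcal{S}_{J_0}U^* = \Span\{I, (R_UP_0)\cdot\sigma\},
\]
using linearity in $x$ to pass from real to complex combinations. Since $R_U P_0$ is a two-dimensional real subspace, this proves $\subset$.

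For the inclusion $\supset$, let $P$ be any two-dimensional subspace of $\mathbb{R}^3$. Since $SO(3)$ acts transitively on planes through the origin, there is $R\in SO(3)$ with $RP_0=P$. The surjectivity of $U\mapsto R_U$ in Proposition \ref{Bloch_representation} then yields $U \in U(2)$ with $R_U = R$. Setting $C=UJ_0U^*$, which is a conjugation, the computation above gives $\SC=\Span\{I,P\cdot\sigma\}$.

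The main care point is the bookkeeping: the span must be taken over $\mathbb{C}$ while $P$ is real, and the identity $U(x\cdot\sigma)U^* = (R_Ux)\cdot\sigma$ must be extended from $x\in\mathbb{R}^3$ to complex $x$ by linearity. Otherwise the argument is routine.
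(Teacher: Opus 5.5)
Your proposal is correct and follows essentially the same route as the paper: write $C=UKU^{*}$ with $K$ the standard conjugation, identify $\mathcal{S}_K=\Span\{I,\sigma_1,\sigma_3\}$ (your transpose computation justifies this, whereas the paper simply asserts it), and transport by $U\mapsto R_U$. For the reverse inclusion, both use the surjectivity of $U\mapsto R_U$ onto $SO(3)$ together with the fact that $SO(3)$ acts transitively on planes in $\mathbb{R}^3$.
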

\begin{proof}
Let $K$ be a conjugation associated with the standard basis. For any $C \in \Conj$, there is $U \in U(2)$ such that $C=UKU^{*}$. (Let $\{f_i\}$ be an orthonormal basis such that $C(f_i)=f_i$. Define $U \in U(2)$ by $U(e_i)=f_i$ for $i=1,2$, where $\{e_i\}$ is the standard basis.) Then $\SC=U\mathcal{S}_KU^{*}$. We have $\mathcal{S}_K=\Span\{I, \sigma_1, \sigma_3\}$. Let $P_K=\{ (x_1,0,x_3) \in \mathbb{R}^3 \mid x_1, x_3 \in \mathbb{R}\}$ be a two-dimensional subspace of $\mathbb{R}^3$. It follows that 
\[
\mathcal{S}_K=\Span\{I, P_K \cdot \sigma\}.
\]
By Proposition \ref{Bloch_representation}, we have
\begin{equation*}
\begin{split}
\SC&=U\mathcal{S}_KU^{*}\\
&=U\Span\{I, P_K \cdot \sigma\}U^{*}=\Span\{I, U P_K \cdot \sigma U^{*}\}\\
&=\Span\{I, R_U P_K \cdot \sigma \}.
\end{split}
\end{equation*}
Since $R_U \in SO(3)$, $R_U P_K$ is  a two-dimensional subspace of $\mathbb{R}^3$.

We shall the converse. Let $P$ be a two-dimensional subspace of $\mathbb{R}^3$. Then there is $z \in \mathbb{R}^3$ such that $P=\{x \in \mathbb{R}^3 \mid \la x, z\ra=0\}$. There is $R \in SO(3)$ such that $R(0,1,0)=z$. As  the elements of $SO(3)$ preserve the Euclidean inner product on $\mathbb R^3$, we have  
\[
RP_K=R(\{x \in \mathbb{R}^3 \mid \la x, (0,1,0)\ra =0\})=\{x \in \mathbb{R}^3 \mid \la x, z\ra=0\}=P.
\]
By Proposition \ref{Bloch_representation}, there is $U \in U(2)$ such that $R=R_U$. Define $D=UKU^{*}$. Then $D \in \Conj$ and 
\[
\mathcal{S}_D=U\mathcal{S}_KU^{*}=\Span\{I, R_U P_K \cdot \sigma \}=\Span\{I, P \cdot \sigma \}
\]
as the same argument above. 
\end{proof}

\begin{proof}[{\textbf {Proof of Theorem \ref{main0}}}] 
$\bm{(1) \Rightarrow (2)}.$ Let $\{e_1,e_2\} \in \mathcal{E}(\cH)$ and  $C \in \Lambda_{e_1}$. Corollary \ref{dim2o} shows that 
\[
\bigcap_{C \in \Lambda_{e_1}} \SC=\Span\{e_1 \otimes e_1, I \}=D_{\{e_1,e_2\}}. 
\]
Since $T(\bigcap_{C \in \Lambda_{e_1}} \SC)=\bigcap_{C \in \Lambda_{e_1}} \mathcal{S}_{\psi(C)}$ and $\dim \bigcap_{C \in \Lambda_{e_1}} \SC=2$, we have $\dim \bigcap_{C \in \Lambda_{e_1}} \mathcal{S}_{\psi(C)}=2$. By Lemma \ref{dim2t},  there is $e'_1 \in \cH$ with $\|e'_1\|=1$ such that 
\[
\bigcap_{C \in \Lambda_{e_1}} \mathcal{S}_{\psi(C)}=\Span\{e'_1 \otimes e'_1, I \}=D_{\{e'_1,e'_2\}},
\]
where $\{e'_1, e'_2\} \in \mathcal{E}(\cH)$. 
Hence, we have 
\begin{equation}\label{diago}
T(D_{\{e_1,e_2\}})=D_{\{e'_1,e'_2\}}, 
\end{equation}
for any $\{e_1,e_2\}  \in \mathcal{E}(\cH)$. Since a similar argument holds for $T^{-1}$, there is a bijective map $\mathcal{F}: \mathcal{E}(\cH)/ \sim \to \mathcal{E}(\cH)/ \sim$ such that 
\[
T(D_{[e_n]})=D_{\mathcal{F}([e_n])}.
\]
Applying Lemma \ref{lift}, there is a bijection $\varphi$ such that $q\circ \varphi=\mathcal{F} \circ q$. Therefore we have
\[
T(D_{\{e_n\}})=T(D_{[e_n]})=D_{\mathcal{F}([e_n])}=D_{[\varphi(\{e_n\})]}=D_{\varphi(\{e_n\})}.
\] 
This completes the proof of  $(1)\Rightarrow(2)$. \\
$\bm{(2) \Rightarrow (3)}.$ 
By Lemma \ref{I3}, there is $c \in \mathbb{C} \setminus \{0\}$ such that $T(I)=cI$. Since $T$ is a complex linear map, there are linear maps $\ell: \mathbb{C}^3 \to \mathbb{C}$ and $L: \mathbb{C}^3 \to \mathbb{C}^3$ such that 
\[
T(x \cdot \sigma)=\ell(x)I+L(x)\cdot \sigma.
\]
Since $T$ is a bijection, $L$ is an invertible matrix. For any $y=(y_1, y_2, y_3) \in \mathbb{R}^3 \setminus \{0\}$, there is $\{e_1, e_2\} \in \mathcal{E}(\cH)$ such that $\frac{1}{2}(I+\frac{y}{\|y\|}\cdot \sigma)=e_1\otimes e_1$. Then 
\[
T(\Span\{e_1\otimes e_1, I\})=\Span\{f_1 \otimes f_1, I\},
\]
where $\varphi(\{e_1, e_2\})=\{f_1, f_2\}$. Thus there is $z=(z_1, z_2, z_3) \in \mathbb{R}^3$ with $z_1^2+z_2^2+z_3^2=1$ such that $f_1 \otimes f_1=\frac{1}{2}(I+z \cdot \sigma)$. 
It implies 
\begin{multline*}
\Span\{I, L(y)\cdot \sigma\}=T(\Span\{I, y/\|y\| \cdot \sigma\})\\=T(\Span\{e_1\otimes e_1, I\})=\Span\{f_1 \otimes f_1, I\}=\Span\{I, z \cdot \sigma\}.
\end{multline*}
As $\{I, \sigma_1, \sigma_2, \sigma_3\}$ are linearly independent,  we have $L(y)=\lambda z$ for some $\lambda \in \mathbb{C}$. Let $e_1=(1,0,0)$ , $e_2=(0,1,0)$ and $e_3=(0,0,1)$ be the standard basis on $\mathbb{R}^3$. Then there are $b_i \in \mathbb{R}^3$ with $\|b_i\|=1$ such that $L(e_i)=\lambda_i b_i$ for some $\lambda_i \in \mathbb{C}$ for $i=1,2,3$. 
In addition as $e_1+e_2+e_3 \in \mathbb{R}^3$, there is $b \in \mathbb{R}^3$ such that $L(e_1+e_2+e_3)=\lambda b$ for some $\lambda \in \mathbb{C}\setminus\{0\}$. Since $L$  is an invertible matrix and $\{e_1, e_2, e_3\}$  is an orthonormal basis, there is $\{b_1, b_2, b_3\}$ is linearly independent. As $b_i \in \mathbb{R}^3$ and $b \in \mathbb{R}^3$, there are $r_i \in \mathbb{R}$ such that $b=r_1b_1+r_2b_2+r_3b_3$. It implies that
\[
\lambda_1 b_1+\lambda_2 b_2+\lambda_3 b_3=L(e_1+e_2+e_3)=\lambda b= \lambda (r_1b_1+r_2b_2+r_3b_3).
\]
We get $\lambda_i=\lambda r_i$ for any $i=1,2,3$. We define $B \in GL(3,\mathbb{R})$ by $Be_i=r_ib_i$ for any $i=1,2,3$. Then we have $L=\lambda B$. Therefore we have
\[
T(aI+x\cdot \sigma)=c aI+\ell(x)I+\lambda Bx.
\] 

$\bm{(3) \Rightarrow (1)}.$
Applying Lemma \ref{plane}, for any $C \in \Conj$, we have 
\[
\SC=\Span\{I, P\cdot\sigma\},  
\]
where $P$ is a  two-dimensional subspace of $\mathbb{R}^3$. For any $x \in P$, we have
\[
T(aI+x \cdot \sigma)=c aI+\ell(x)I+\lambda Bx \cdot \sigma \in \Span\{ I, BP \cdot \sigma\}.
\]
Since $B \in GL(3, \mathbb{R})$, we get $BP$  is a  two-dimensional subspace of $\mathbb{R}^3$. Hence there is $D \in \Conj$ such that $\Span\{ I, BP \cdot \sigma\}=\mathcal{S}_D$. As $T$ is a bijective linear map, we obtain $T(\SC)=\mathcal{S}_D$. The same argument applies to $T^{-1}$. Thus, by choosing a section of the quotient space of conjugations modulo the relation $C\sim e^{i\theta}C$, we can define, as above, a bijection $\psi:\Conj \to \Conj$ satisfying
\[
T(S_C)=S_{\psi(C)}.
\]


\end{proof}

\section{Proof of Theorem \ref{main}}
In this section, we assume that $\dim \cH \ge 3$. The implication $(3)\Rightarrow(1)$ is clear. Indeed, suppose that $(3)$ holds. Define $\psi(C)=UCU^{*}$ for any $C \in \Conj$. Then $\psi$ is a bijection on $\Conj$ that preserves commutativity. Moreover, we have  $T(\SC)=\mathcal{S}_{\psi(C)}$ for any $C \in \Conj$.
Thus, it remains to prove $(1)\Rightarrow(2)$ and $(2)\Rightarrow(3)$ in this paper.  

We first show that $(1)\Rightarrow(2)$.  For any orthonormal basis $\{e_i\}_{i \in \mathcal{I}_N}$ and a map $\rho: \mathcal{I}_N \to \{\pm1\} $, we define a conjugation $C_{\{e_i\}}^{\rho} \in \Conj$ by 
\[
C_{\{e_i\}}^{\rho}e_j= \rho(j) e_j, \quad \forall j \in \mathcal{I}_N.
\]
We define \[
\Lambda_{\{e_n\}}=\{C_{\{e_i\}}^{\rho} \in \Conj \mid \rho: \mathcal{I}_N \to \{\pm1\} \}.
\]
\begin{lemma}\label{commutative}
 Let $\psi: \Conj \to \Conj$ be a bijection that preserves commutativity. Then for any orthonormal basis $\{e_i\}_{i \in \mathcal{I}_N}$, there is an orthonormal basis $\{f_i\}_{i \in \mathcal{I}_N}$ such that 
 \[
 \psi(\Lambda_{\{e_n\}})=\Lambda_{\{f_n\}}.
 \]
\end{lemma}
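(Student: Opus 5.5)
The plan is to characterize $\Lambda_{\{e_n\}}$ purely in terms of the commutativity relation on $\Conj$. Since $\psi$ preserves commutativity in both directions, it will then carry sets of the form $\Lambda_{\{e_n\}}$ to sets of the same form. The natural candidate is \emph{maximal commuting subsets} of $\Conj$.

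\textbf{Step 1: $\Lambda_{\{e_n\}}$ is a maximal commuting family.} Every $C^{\rho}_{\{e_i\}}$ acts diagonally with real entries $\pm1$, so these conjugations pairwise commute. Now let $D\in\Conj$ commute with every element of $\Lambda_{\{e_n\}}$, and write $C=C^{1}_{\{e_i\}}$.
\begin{itemize}
\item Put $S_\rho=CC^\rho_{\{e_i\}}$. This is the linear diagonal $\pm1$ unitary determined by $\rho$, and $D$ commutes with every $S_\rho$.
\item The rank-one projections $e_j\otimes e_j$ are of the form $(I+S_\rho)/2-(I+S_{\rho'})/2$ for suitable $\rho,\rho'$, so $D$ commutes with each $e_j\otimes e_j$. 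Hence $De_j=\alpha_je_j$ with $|\alpha_j|=1$.
\item From $DC=CD$ we get $\alpha_j=\overline{\alpha_j}$, so $\alpha_j=\pm1$ and $D\in\Lambda_{\{e_n\}}$.
\end{itemize}

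\textbf{Step 2: structure of a general maximal commuting family $\mathcal M$.} Fix $C\in\mathcal M$ and, for $D\in\mathcal M$, set $U_D=DC$. Since $D^2=C^2=I$, commutation $DC=CD$ is equivalent to $U_D^2=I$. So $U_D$ is a self-adjoint unitary, $U_D=2P_D-I$, and a short computation gives $CU_DC=U_D$, i.e.\ $P_D\in\SC$.

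For $D_1,D_2\in\mathcal M$ we have $U_{D_1}U_{D_2}=D_1D_2=D_2D_1=U_{D_2}U_{D_1}$. Thus the projections $P_D$ commute, commute with $C$, and generate an abelian von Neumann algebra $\mathcal A$. Conversely, $VC$ is a conjugation commuting with all of $\mathcal M$ whenever $V=2P-I$ with $P$ a projection in the commutant of $\mathcal A$ and $CPC=P$. Maximality of $\mathcal M$ then forces $\mathcal A$ to be maximal abelian.

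If $\mathcal A$ is atomic, its minimal projections are rank one, say $f_n\otimes f_n$. Each $f_n$ can be rephased so that $Cf_n=f_n$; this uses $CPC=P$ and that $C$ restricted to a one-dimensional invariant subspace is multiplication by a unimodular scalar. We then get $C=C^1_{\{f_i\}}$ and $\mathcal M=\Lambda_{\{f_n\}}$. In finite dimensions every maximal abelian algebra is atomic, so this already proves the lemma there: $\psi(\Lambda_{\{e_n\}})$ is a maximal commuting family, hence equals some $\Lambda_{\{f_n\}}$.

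\textbf{Step 3 (main obstacle): the infinite-dimensional case.} When $\dim\cH=\infty$ there are maximal commuting families attached to diffuse masas. For example, on $L^2[0,1]$ the family $\{sK\}$, with $K$ complex conjugation and $s$ ranging over measurable $\pm1$-valued functions, is maximal commuting but not of the form $\Lambda_{\{f_n\}}$. So maximality alone does not suffice, and one must single out the atomic families using commutativity only.

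I would attempt this through pairs $C,D\in\mathcal M$ with $P_D$ of rank one. The set $\{C,D\}'$ of conjugations commuting with both $C$ and $D$ consists exactly of the conjugations preserving the one-dimensional range of $P_D$. The idea is to characterize these pairs by an extremality property of $\{C,D\}'$ among the sets $\{C,D'\}'$ with $D'\in\mathcal M$, for instance maximality under inclusion modulo the pair $\{\pm C\}$. Then $\mathcal M$ is atomic exactly when the ranges of such rank-one $P_D$ span $\cH$, a condition again expressible through commutativity, and $\psi$ preserves it.

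If this combinatorial characterization of rank one proves too delicate, the fallback is to also use the hypothesis $T(\SC)=\mathcal S_{\psi(C)}$ together with Corollary \ref{dim2o}, Lemma \ref{dim2t} and Lemma \ref{twop}, which detect rank-one projections through dimensions of intersections $\bigcap\SC$. That hypothesis is available in the proof of $(1)\Rightarrow(2)$ where this lemma is used, though not in the lemma as stated, so it would only serve as a way to finish that implication.
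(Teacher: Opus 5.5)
For $N<\infty$ your argument is correct and is essentially the paper's. Your Step 1 is the paper's reverse-inclusion argument, which also uses $\rho\equiv 1$ and the single sign flips $\rho_n$. The atomic case of your Step 2 is the paper's decomposition of $\cH$ into joint eigenspaces $\cH_\epsilon$ of the commuting involutions $U^{\rho}=\psi(C_0)\psi(C^{\rho}_{\{e_i\}})$, each equipped with a $\psi(C_0)$-fixed orthonormal basis.

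\textbf{The gap is the case $N=\infty$.} There your Step 3 is only a plan, and the criterion it proposes cannot work.
\begin{enumerate}
\item Your description of $\{C,D\}'$ is wrong. It is not the set of conjugations preserving $\operatorname{ran}P_D$. It is the set of conjugations that commute with $C$ \emph{and} preserve $\operatorname{ran}P_D$. Equivalently, it corresponds to the projections $Q$ with $CQC=Q$ (call them $C$-real) that commute with $P_D$.
\item With the correct description, $\{C,D\}'\subseteq\{C,D'\}'$ forces $P_{D'}$ to commute with every $C$-real projection commuting with $P_D$. Those generate $B(P_D\cH)\oplus B((I-P_D)\cH)$, so $P_{D'}\in\{0,I,P_D,I-P_D\}$, i.e.\ $D'\in\{\pm C,\pm D\}$.
\item Hence the sets $\{C,D\}'$ with $D\in\mathcal M\setminus\{\pm C\}$ are pairwise incomparable. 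Every one of them is inclusion-maximal, so maximality does not detect rank one.
\end{enumerate}
The fallback through $T$ proves a different statement, since the lemma contains no $T$.

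Note that the paper's proof does not supply this step either. It passes from commutativity of $\mathcal{U}$ directly to simultaneous diagonalizability in an orthonormal basis and to $\cH=\bigoplus_\epsilon\cH_\epsilon$. Your example $\{sK\}$ on $L^2[0,1]$ shows this is not automatic, since there every $\cH_\epsilon=0$. What is actually needed is that $\psi$ sends atomic maximal commuting sets to atomic ones. This can be extracted from commutativity alone, as follows.

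\textbf{Detecting rank at most one.} For $D,D'$ commuting with $C$ and with each other, let $B(D,D')$ be the intersection of all maximal commuting subsets of $\Conj$ containing $C,D,D'$.
\begin{enumerate}
\item Via $E\mapsto P_E=(I+EC)/2$, maximal commuting sets through $C$ correspond to maximal commuting families of $C$-real projections.
\item Under this correspondence, $B(D,D')$ is the Boolean algebra generated by $P_D$ and $P_{D'}$. The reason is that on a $C$-invariant subspace of dimension $\ge 2$, no nontrivial $C$-real projection lies in every maximal family.
\item So $|B(D,D')|=2^k$, where $k\le 4$ is the number of nonzero products $P_D^{\pm}P_{D'}^{\pm}$, writing $P^{+}=P$ and $P^{-}=I-P$.
\item Hence $P_D$ has rank or corank at most one iff $|B(D,D')|\le 8$ for all such $D'$. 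If both rank and corank are $\ge 2$, take $P_{D'}=R_1+R_2$ with $C$-real rank-one $R_1\le P_D$ and $R_2\le I-P_D$ (Lemma \ref{GP1}); this gives $k=4$.
\end{enumerate}

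\textbf{Detecting atomicity.} Let $\mathcal R$ be the set of such $D$ in a maximal commuting set $\mathcal M\ni C$. Then $\mathcal M$ has the form $\Lambda_{\{f_n\}}$ iff it is the only maximal commuting set containing $\{C\}\cup\mathcal R$.
\begin{enumerate}
\item In the atomic case this is exactly your Step 1.
\item Otherwise, let $E_a$ be the sum of the rank-one projections in the masa generated by the $P_E$, $E\in\mathcal M$. Every $D\in\mathcal R$ has $P_D\le E_a$ or $I-P_D\le E_a$.
\item So for any two non-commuting $C$-real projections $R,R'\le I-E_a$, the conjugations $(2R-I)C$ and $(2R'-I)C$ both commute with $\{C\}\cup\mathcal R$ but not with each other. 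That gives two distinct maximal commuting sets containing $\{C\}\cup\mathcal R$.
\end{enumerate}

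Both conditions are phrased purely through commutativity, so $\psi$ preserves them. Your Step 2 then finishes the proof.
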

\begin{proof}
   Let $\{e_i\}$ be an orthonormal basis.  Fix $C_0 \in \Lambda_{\{e_n\}}$. For any $C_{\{e_i\}}^{\rho} \in \Lambda_{\{e_n\}}$, we have $C_0C_{\{e_i\}}^{\rho}=C_{\{e_i\}}^{\rho}C_0$. Since $\psi$ preserves commutativity,  we have $\psi(C_0)\psi(C_{\{e_i\}}^{\rho})=\psi(C_{\{e_i\}}^{\rho})\psi(C_0)$. Define $U^{\rho}=\psi(C_0)\psi(C_{\{e_i\}}^{\rho})$. Then we have $(U^{\rho})^*=U^{\rho}$ and $(U^{\rho})^2=I$, thus $U^{\rho}$ is a self-adjoint operator with  $\sigma(U^{\rho}) \subset \{\pm 1\}$.  Let $\mathcal{U}=\{U^{\rho} \mid \rho: \mathcal{I}_N \to \{\pm1\} \}$. For any $U^{\rho_1}, U^{\rho_2} \in \mathcal{U}$, we get
   \[
   \begin{split}
        U^{\rho_1}U^{\rho_2}&=\psi(C_0)\psi(C_{\{e_i\}}^{\rho_1})\psi(C_0)\psi(C_{\{e_i\}}^{\rho_2})\\&=\psi(C_0)\psi(C_{\{e_i\}}^{\rho_2})\psi(C_0)\psi(C_{\{e_i\}}^{\rho_1})=U^{\rho_2}U^{\rho_1}.
   \end{split}
   \]
Hence $\mathcal{U}$ is commutative under multiplication. It implies that there is an orthonormal basis such that the elements of $\mathcal{U}$ are simultaneously diagonalizable with respect to the basis. For any map $\epsilon: \{ \rho: \mathcal{I}_N \to \{\pm1\}\} \to  \{\pm1\}$, we define $\cH_{\epsilon}=\bigcap_{\rho}\ker(U^{\rho}-\epsilon(\rho)I)$. Then we get $\cH=\oplus_{\epsilon} \cH_{\epsilon}$. We remark that
\begin{equation*}
    x \in \cH_{\epsilon} \Longleftrightarrow U^{\rho}x=\epsilon(\rho)x, \quad \text{for any}\ \rho.
\end{equation*}
Let $x \in \cH_{\epsilon}$. Then we get
\[
U^{\rho}(\psi(C_0)x)=\psi(C_0)(U^{\rho}x)=\psi(C_0)\epsilon(\rho)x=\epsilon(\rho)\psi(C_0)x
\]
for any $\rho$. Thus we get $\psi(C_0)x \in \cH_{\epsilon}$. It follows that for any $\epsilon$,
\[
\psi(C_0)(\cH_{\epsilon})=\cH_{\epsilon}.
\]
Thus $\psi(C_0)|_{\cH_{\epsilon}}$ is a conjugation on $\cH_{\epsilon}$. Applying Lemma \ref{GP1}, there is an orthonormal basis $F_{\epsilon}$ on $\cH_{\epsilon}$ such that $\psi(C_0)|_{\cH_{\epsilon}}$ is a conjugation associated with $F_{\epsilon}$. Choose $\{f_n\} \in \mathcal{E}(\cH)$ such that $\bigcup_{\epsilon}F_{\epsilon} =\{f_n \mid n \in I_N\} $. It follows that $\psi(C_0)f_n=f_n$ for any $n \in I_N$. For any $f_n \in \cH_{\epsilon}$, we have
\[
\psi(C_{\{e_i\}}^{\rho})f_n=\psi(C_{\{e_i\}}^{\rho})\psi(C_0)f_n=U^{\rho}f_n=\epsilon(\rho)f_n.
\]
This implies that for any $\rho$, $\psi(C_{\{e_i\}}^{\rho}) \in \Lambda_{\{f_n\}}$. Therefore we have
\[
\psi(\Lambda_{\{e_n\}}) \subset \Lambda_{\{f_n\}}.
\]
Let $D \in \Lambda_{\{f_n\}}$. Since $\psi$ is bijective, there is $C \in \Conj$ such that $\psi(C)=D$.  As the elements of $\Lambda_{\{f_n\}}$ are commutative under multiplication, for any $\rho$ we have
\[
\psi(C_{\{e_i\}}^{\rho})\psi(C)=\psi(C)\psi(C_{\{e_i\}}^{\rho}).
\]
Since $\psi$ preserves commutativity, we get $C_{\{e_i\}}^{\rho}C=CC_{\{e_i\}}^{\rho}$.
Fix $n \in \mathcal{I}_N$. Then we have 
\[
C_{\{e_i\}}^{\rho}C(e_n)=CC_{\{e_i\}}^{\rho}(e_n)=C\rho(n)e_n=\rho(n)Ce_n.
\]
Take $\rho \equiv1$. Then we have $C_{\{e_i\}}^{1}Ce_n=Ce_n$. Thus there is $\alpha_i \in \mathbb{R}$ such that
\[
Ce_n=\sum_{i}\alpha_i e_i
\]
as $\{e_i\}$ is an orthonormal basis. On the other hand, when $\rho_n(i)=(-1)^{\delta_{n,i}}$,  we have $C_{\{e_i\}}^{\rho_n}C(e_n)=-C(e_n)$. Since
\[
C_{\{e_i\}}^{\rho_n}C(e_n)=C_{\{e_i\}}^{\rho_n}(\sum_{i}\alpha_i e_i)=\sum_{i}\alpha_iC_{\{e_i\}}^{\rho_n} e_i=\sum_{i}\alpha_i\rho_n(i) e_i=-\alpha_n e_n+\sum_{i \neq n} \alpha_i e_i
\]
and
\[
-C(e_n)=\sum_{i}-\alpha_i e_i,
\]
we get  $\alpha_{i}=0$  if $\ i \neq n$. Therefore 
\[
Ce_n=\alpha_ne_n.
\]
Since $C$ is an isometry, $\alpha_n \in \{ \pm1\}$.
As this holds for every $n \in \mathcal{I}_N$, we see that $C \in \Lambda_{\{e_n\}}$. This implies that $D=\psi(C) \in \psi(\Lambda_{\{e_n\}})$, and 
\[
\psi(\Lambda_{\{e_n\}})=\Lambda_{\{f_n\}}.
\]
\end{proof}

\begin{proof}[{\textbf {Proof of Theorem \ref{main}: $\bm{(1) \Rightarrow (2)}$}}] 
Let $\{e_n\}$ be an orthonormal basis of $\cH$. Applying Lemma \ref{commutative}, there is an orthonormal basis $\{f_n\}$ such that 
 \[
 \psi(\Lambda_{\{e_n\}})=\Lambda_{\{f_n\}}.
 \]
By Lemma \ref{L1},  we have 
 \begin{equation*}
 \begin{split}
      T(D_{\{e_n\}})&=T(\bigcap_{C \in \Lambda_{\{e_n\}}} \mathcal{S}_{C})=\bigcap_{C \in \Lambda_{\{e_n\}}} T(S_c)\\&=\bigcap_{C \in \Lambda_{\{e_n\}}} S_{\psi(C)}=\bigcap_{C \in \Lambda_{\{f_n\}}} S_C=D_{\{f_n\}}.
 \end{split}
 \end{equation*}
Since $T$ is a bijection and a similar argument as above holds for $T^{-1}$, there is a bijective map $\mathcal{F}:\mathcal{E}(\cH)/ \sim  \to \mathcal{E}(\cH)/ \sim $ by 
\[
T(D_{[e_n]})=D_{\mathcal{F}([e_n])}.
\]
Applying Lemma \ref{lift}, there is  a bijection $\varphi$ satisfying $q\circ \varphi=\mathcal{F} \circ q$. Therefore we have
\[
T(D_{\{e_n\}})=T(D_{[e_n]})=D_{\mathcal{F}([e_n])}=D_{[\varphi\{e_n\}]}=D_{\varphi(\{e_n\})}.
\] 
\end{proof}

Now we show $(2)\Rightarrow(3)$. We assume that $T: \BH \to \BH$ is a continuous bijective linear map  and $\varphi: \mathcal{E}(\cH) \to \mathcal{E}(\cH)$ is a bijection satisfying 
\[
T(\mathcal{D}_{\{e_n\}}) = \mathcal{D}_{\varphi(\{e_n\})},
\] 
for any $\{e_n\} \in \mathcal{E}(\cH)$. 

\begin{lemma}\label{projection1}
Let $P_1, P_2 \in \BH $ be projections such that $P_1(\cH) \perp P_2(\cH)$. 
Then there are projections $Q_1, Q_2 \in \BH$
such that 
\[
T(\Span \{P_1, I\} )=\Span \{Q_1,I\},
\]
 and 
\[
T(\Span \{P_2, I\} )=\Span \{Q_2,I\}.
\]
Then, one of the following holds;
\[
Q_1Q_2=0, \ Q_1Q_2=Q_1, \ Q_1Q_2=Q_2, \ \text{or} \ (I-Q_1)(I-Q_2)=0.
\]
\end{lemma}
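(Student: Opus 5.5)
Throughout, the natural standing assumption (implicit in the statement) is $P_i \neq 0, I$. The plan is to express each $\Span\{P,I\}$ as an intersection of diagonal algebras, push it through $T$, and read off the relation between $Q_1$ and $Q_2$ from a common diagonalizing basis.

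\textbf{Existence of $Q_1,Q_2$.} First, $\Span\{P,I\}$ is the intersection of all $\mathcal{D}_{\{e_n\}}$ over the orthonormal bases $\{e_n\}$ adapted to the decomposition $P(\cH)\oplus P(\cH)^{\perp}$. Indeed, each such basis is fixed by the conjugations in $\Lambda=\{C : C(P(\cH))=P(\cH)\}$ associated with it, and conversely every $C\in\Lambda$ arises this way by Remark \ref{projC} and Lemma \ref{GP1}. Hence Lemma \ref{onep} together with Lemma \ref{L1} gives this intersection description.

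Applying $T$ turns it into an intersection of diagonal algebras $\mathcal{D}_{\varphi(\{e_n\})}$. Any intersection of diagonal algebras is a unital, $*$-closed, norm-closed algebra; in fact it is a commutative von Neumann algebra. Since $T$ is a linear bijection, this image has dimension $2$. A $2$-dimensional unital $*$-algebra is of the form $\Span\{Q,I\}$ for a projection $Q\neq 0,I$; alternatively one can use the spectral argument of Lemma \ref{dim2t} directly, applied to a self-adjoint element independent of $I$. This produces $Q_1$ and $Q_2$.

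\textbf{The key relation.} Because $P_1\perp P_2$, there is one orthonormal basis $\{e_n\}$ diagonalizing both, namely a basis adapted to $P_1(\cH)\oplus P_2(\cH)\oplus (P_1+P_2)(\cH)^{\perp}$. Then
\[
\Span\{P_1,I\}+\Span\{P_2,I\}\subset \mathcal{D}_{\{e_n\}},
\]
so $Q_1$ and $Q_2$ both lie in $T(\mathcal{D}_{\{e_n\}})=\mathcal{D}_{\varphi(\{e_n\})}$. In particular $Q_1$ and $Q_2$ commute, and $\cH$ splits into the four pieces $Q_1Q_2$, $Q_1(I-Q_2)$, $(I-Q_1)Q_2$ and $(I-Q_1)(I-Q_2)$. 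The claim says exactly that at least one of these four pieces is zero: $Q_1Q_2=0$, or $Q_1\le Q_2$ (the $(I-Q_2)Q_1$ piece vanishes), or $Q_2\le Q_1$, or $(I-Q_1)(I-Q_2)=0$.

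\textbf{Main step: the dimension count.} I will use $V=\Span\{P_1,P_2,I\}$, which has dimension $\le 3$. Its image $T(V)=\Span\{Q_1,Q_2,I\}$ has the same dimension. Suppose instead that all four products are nonzero. Then $Q_1$, $Q_2$ and $I$ are linearly independent, and $Q_1Q_2$ is not in their span, since it is a fourth independent minimal-projection combination. So the algebra generated by $Q_1,Q_2$ has dimension $4$.

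To reach a contradiction, I need the fact that $T(V)$ is an algebra. I will show this by realizing $V$ as an intersection of diagonal algebras: by Lemma \ref{twop} combined with Lemma \ref{L1}, $V$ is the intersection of $\mathcal{D}_{\{e_n\}}$ over bases adapted to $P_1(\cH)\oplus P_2(\cH)\oplus(\cdot)^{\perp}$. Hence $T(V)$ is an intersection of diagonal algebras, so it is an algebra containing $Q_1Q_2$. That forces $\dim T(V)\ge 4>3$, which is the contradiction.

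The main obstacle I expect is correctly justifying the intersection representation of $V$ (including the degenerate case $P_1+P_2=I$, where $\dim V=2$ and the same argument works) and checking that intersections of the sets $\mathcal{D}_{\varphi(\cdot)}$ are closed under multiplication. The latter is immediate, since each $\mathcal{D}_{\{f_n\}}$ is an algebra.
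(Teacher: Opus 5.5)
Your proof is correct and follows essentially the paper's route. As in the paper, Lemmas \ref{onep}, \ref{twop} and \ref{L1} give the intersection representations that produce $Q_1,Q_2$; a common diagonalizing basis gives $Q_1Q_2=Q_2Q_1$; and $T(\Span\{P_1,P_2,I\})=\Span\{Q_1,Q_2,I\}$, being an intersection of diagonal algebras, contains $Q_1Q_2$.

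The only difference is in the last step. The paper does a case analysis on the coefficients of $Q_1Q_2=\alpha Q_1+\beta Q_2+\gamma I$. You instead note that if the four orthogonal projections $Q_1Q_2$, $Q_1(I-Q_2)$, $(I-Q_1)Q_2$, $(I-Q_1)(I-Q_2)$ were all nonzero, then $Q_1,Q_2,I,Q_1Q_2$ would be linearly independent, contradicting $\dim T(\Span\{P_1,P_2,I\})\le 3$. The case $P_i\in\{0,I\}$ that you set aside is trivial, since then $Q_i\in\{0,I\}$.
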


\begin{proof}
Let $P_1, P_2 \in \BH$ be projections such that $P_1(\cH) \perp P_2(\cH)$. 
For any $i=1,2$, we define $\mathcal{E}_i=\{ \{e_n\} \in \mathcal{E}(\cH) \mid \{e_n\} \cap P_i(\cH) \in \mathcal{E}(P_i(\cH))\}$. Then we shall show that 
\begin{equation}\label{cap1}
    \{ C \in \Conj \mid C \in \Lambda_{\{e_n\}}, \{e_n\} \in \mathcal{E}_i\}=\{ C \in \Conj \mid CP_i(\cH)=P_i(\cH)\}.
\end{equation}
Let $C \in \Lambda_{\{e_n\}}$ with $\{e_n\} \in \mathcal{E}_i$. For any $x \in P_i(\cH)$, there is $\alpha_n \in \mathbb{C}$ such that $x=\sum_{\{n \mid e_n \in P_i(\cH)\}} \alpha_ne_n$. We have $C(x)=C(\sum_{\{n \mid e_n \in P_i(\cH)\}} \alpha_ne_n)=\sum_{\{n \mid e_n \in P_i(\cH)\}}\overline{\alpha_n}e_n \in P_i(\cH)$. Thus $CP_i(\cH)=P_i(\cH)$. \\
To prove the opposite inclusion, let $C \in \Conj$ with $CP_i(\cH)=P_i(\cH)$. It follows that $C|_{P_i(\cH)}$ is a conjugation on $P_i(\cH)$. Therefore there is $\{e_n\} \in \mathcal{E}_i$ such that $C(e_n)=e_n$. Then $C \in \Lambda_{\{e_n\}}$. 
By a similar argument, we obtain
\begin{equation}\label{cap12}
    \begin{split}
        &\{ C \in \Conj \mid C \in \Lambda_{\{e_n\}}, \{e_n\} \in \mathcal{E}_1 \cap \mathcal{E}_2\}\\&=\{ C \in \Conj \mid CP_1(\cH)=P_1(\cH), CP_2(\cH)=P_2(\cH)\}.
    \end{split}
\end{equation}
By Lemma \ref{onep}, \ref{L1} and \eqref{cap1}, we have 
\begin{equation*}
    \begin{split}
        \Span\{P_i, I\}&=\bigcap_{\{C \mid CP_i(\cH)=P_i(\cH)\}} \SC\\&
        =\bigcap_{\{e_n\} \in \mathcal{E}_i} \bigcap_{C \in \Lambda_{\{e_n\}}} \SC
        =\bigcap_{\{e_n\} \in \mathcal{E}_i} D_{\{e_n\}}. 
    \end{split}
\end{equation*}
Thus we get
\begin{equation*}
    \begin{split}
        T(\Span\{P_i, I\})
        &=T(\bigcap_{\{e_n\} \in \mathcal{E}_i} D_{\{e_n\}})\\&=\bigcap_{\{e_n\} \in \mathcal{E}_i} D_{\varphi(\{e_n\})}=\bigcap_{\{e_n\} \in \mathcal{E}_i} \bigcap_{C \in \Lambda_{\varphi(\{e_n\})}}\SC. 
    \end{split}
\end{equation*}
Since $\dim (\bigcap_{\{e_n\} \in \mathcal{E}_i} \bigcap_{C \in \Lambda_{\varphi(\{e_n\})}}\SC) \le 2$,  Lemma \ref{dim1} and Lemma \ref{dim2t} show that there are projections $Q_i \in \BH$ such that 
\[
T(\Span\{P_i, I\})=\Span\{Q_i, I\}, \quad i=1,2.
\]
Note 
\begin{equation}\label{Q1Q2}
    Q_i \in \bigcap_{\{e_n\} \in \mathcal{E}_i} \bigcap_{C \in \Lambda_{\varphi(\{e_n\})}}\SC \subset \bigcap_{\{e_n\} \in \mathcal{E}_1 \cap \mathcal{E}_2}\bigcap_{C \in \Lambda_{\varphi(\{e_n\})}}\SC.
\end{equation}
On the other hand, since $P_1(\cH) \perp P_2(\cH)$, $\mathcal{E}_1 \cap \mathcal{E}_2 \neq \emptyset$. We have

\begin{equation*}
    \begin{split}
&\bigcap_{\{e_n\} \in \mathcal{E}_1 \cap \mathcal{E}_2}\bigcap_{C \in \Lambda_{\varphi(\{e_n\})}}\SC\\&=\bigcap_{\{e_n\} \in \mathcal{E}_1 \cap \mathcal{E}_2}D_{\varphi(\{e_n\})}=\bigcap_{\{e_n\} \in \mathcal{E}_1 \cap \mathcal{E}_2}T(D_{\{e_n\}})\\&=T(\bigcap_{\{e_n\} \in \mathcal{E}_1 \cap \mathcal{E}_2}D_{\{e_n\}})=T(\bigcap_{\{e_n\} \in \mathcal{E}_1 \cap \mathcal{E}_2}\bigcap_{C \in \Lambda_{\{e_n\}}}\SC)\\&=T(\bigcap \{\SC \mid  C \in \Conj, CP_1(\cH)=P_1(\cH), CP_2(\cH)=P_2(\cH)\})\\&=T(\Span\{ P_1, P_2, I\})=\Span\{ Q_1, Q_2, I\}
        \end{split}
\end{equation*}
by \eqref{cap12} and Lemma \ref{twop}. 
Let $\{e_n\} \in \mathcal{E}_1 \cap \mathcal{E}_2$. Then we get $\Span\{ P_1, P_2, I\} \subset D_{\{e_n\}}$. Therefore 
\[
\Span\{ Q_1, Q_2, I\} =T(\Span\{ P_1, P_2, I\}) \subset T(D_{\{e_n\}})=D_{\varphi(\{e_n\})}.
\]
This implies that $Q_1Q_2=Q_2Q_1$. 
Fix $C \in \Lambda_{\varphi(\{e_n\})}$ where $\{e_n\} \in \mathcal{E}_1 \cap \mathcal{E}_2$. By \eqref{Q1Q2}, we get $CQ_iC=Q_i$ for $i=1,2$. Hence we get
\[
CQ_1Q_2C=CQ_1CCQ_2C=Q_1Q_2=Q_2Q_1=(Q_1Q_2)^*.
\]
It follows that 
\[
Q_1Q_2 \in \bigcap_{\{e_n\} \in \mathcal{E}_1 \cap \mathcal{E}_2}\bigcap_{C \in \Lambda_{\varphi(\{e_n\})}}\SC=\Span\{ Q_1, Q_2, I\}.
\]
Suppose that $Q_1Q_2 \neq 0$. There are $\alpha, \beta, \gamma \in \mathbb{C}$ such that
\[
Q_1Q_2=\alpha Q_1 +\beta Q_2+\gamma I.
\]
Multiplying from the left by $Q_1$, we obtain
\[
(1-\beta)Q_1Q_2=(\alpha+\gamma)Q_1.
\]
Multiplying from the right by $Q_2$, we get $(1-\beta)Q_1Q_2=(\alpha+\gamma)Q_1Q_2$. Since $Q_1Q_2 \neq 0$, $1-\beta=\alpha+\gamma$. We consider two cases; (i)$\beta \neq 1$ and (ii) $\beta =1$.\\
\begin{itemize}
\item[(i)]When $\beta \neq 1$, we get 
\[
Q_1Q_2=Q_1.
\]

\item[(ii)] When $\beta =1$, we have $\gamma=-\alpha$. Thus 
\[
Q_1Q_2=\alpha Q_1 + Q_2-\alpha I.
\]
\begin{itemize}
\item[(1)] When $\alpha \neq 1$, multiplying from the right by $Q_2$, we obtain $Q_1Q_2=\alpha Q_1Q_2 + Q_2-\alpha Q_2 $. It follows that
\[
(1-\alpha) Q_1Q_2=(1-\alpha) Q_2. 
\]
Since $\alpha \neq 1$, we get
\[
Q_1Q_2=Q_2.
\]
\item[(2)]When $\alpha =1$, we have $Q_1Q_2=Q_1 + Q_2- I $, which implies that
\[
(I-Q_1)(I-Q_2)=0.
\]
\end{itemize}
\end{itemize}
\end{proof}

We denote the set of all rank one projections on $\cH$ by $\PH$. 
\begin{lemma}\label{comm}
Let $P_1, P_2 \in \PH$ with $P_1P_2=0$. Then there are two projections $Q_1, Q_2 \in \PH$ 
such that $Q_1Q_2=0$ and $T(P_1) \in \operatorname{span}\{Q_1, I\}$, $T(P_2) \in \operatorname{span}\{Q_2, I\}$. 
\end{lemma}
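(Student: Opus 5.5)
Starting from Lemma \ref{projection1}, applied to $P_1,P_2$ (rank one, $P_1P_2=0$, hence $P_1(\cH)\perp P_2(\cH)$), we get projections $Q_1,Q_2\neq 0,I$ with $T(\Span\{P_i,I\})=\Span\{Q_i,I\}$. One of $Q_1Q_2=0$, $Q_1Q_2=Q_1$, $Q_1Q_2=Q_2$, $(I-Q_1)(I-Q_2)=0$ then holds. Since $\Span\{Q_i,I\}=\Span\{I-Q_i,I\}$, each $Q_i$ may be replaced by its complement. Two things remain: a rank statement ($Q_i$ or $I-Q_i$ has rank one), and a choice of representatives that turns each of the four relations into orthogonality.

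\emph{Rank one.} I would characterise $\Span\{P,I\}$ with $P\in\PH$ intrinsically via the diagonal-algebra structure preserved by $T$. For a projection $P$, $\Span\{P,I\}=\bigcap_{\{e_n\}\in\mathcal{E}_P}\mathcal{D}_{\{e_n\}}$, where $\mathcal{E}_P$ consists of the bases adapted to $P(\cH)$ (as in the proof of Lemma \ref{projection1}). The rank-one case is distinguished by a maximality property: if $P$ has rank one, then $\Span\{P,I\}$ lies in a maximal family of mutually ``orthogonal'' two-dimensional spaces $\Span\{P_n,I\}$, $P_n=e_n\otimes e_n$, with $\Span\{P_n : n\}$ dense in $\mathcal{D}_{\{e_n\}}$. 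Concretely, I would take a basis $\{e_n\}$ with $e_1\in P_1(\cH)$, $e_2\in P_2(\cH)$. Then $T(\mathcal{D}_{\{e_n\}})=\mathcal{D}_{\{f_n\}}$, and each $T(\Span\{e_n\otimes e_n,I\})=\Span\{Q^{(n)},I\}\subset\mathcal{D}_{\{f_n\}}$. Hence each $Q^{(n)}$ is a $\{f_n\}$-diagonal projection, i.e.\ the projection onto the closed span of $f_k$, $k\in S_n$, for some $S_n\subset\mathcal{I}_N$. Linearity, continuity and bijectivity of $T$ show that $\overline{\Span}\{I,Q^{(n)}:n\}=\mathcal{D}_{\{f_n\}}$. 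The pairwise relations from Lemma \ref{projection1} say that for $n\neq m$ the sets $S_n,S_m$ are disjoint, nested, or cover $\mathcal{I}_N$.

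\emph{Combinatorial step (main obstacle).} From these relations and the spanning property, I need to deduce that after complementing some $S_n$ all become singletons. The plan is as follows. Projections onto $f_k$ must lie in the closed span, and a family of sets pairwise disjoint, nested or co-covering (a laminar-type family, after complementation) can separate all points only if the minimal sets are singletons. I would first try showing each $f_k\otimes f_k$ equals some $Q^{(n)}$ or $I-Q^{(n)}$. To do this I would apply $T^{-1}$ and the same argument to $\mathcal{D}_{\{f_n\}}$: there $T^{-1}(\Span\{f_k\otimes f_k,I\})$ is $\Span\{R,I\}$ with $R$ an $\{e_n\}$-diagonal projection, and the symmetry between $T$ and $T^{-1}$ forces a bijective correspondence. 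Taking dimension counts of intersections with $I$ when $N<\infty$ handles the finite case. For $N=\infty$ I would use continuity to pass to limits, which is the delicate part.

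\emph{Orthogonality.} Once $Q_1=Q^{(1)}$ and $Q_2=Q^{(2)}$ are (up to complement) $f_{k}\otimes f_{k}$ and $f_{l}\otimes f_{l}$, we have $k\neq l$ by bijectivity. We then get $Q_1Q_2=0$ with rank-one $Q_i$, and $T(P_i)\in\Span\{Q_i,I\}$ gives the claim.
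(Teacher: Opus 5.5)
Your opening step, complementing the $Q_i$ from Lemma \ref{projection1} to turn the four relations into $Q_1Q_2=0$, matches the paper. The core of the lemma, that the $Q_i$ have rank one, is not proved, however, and the route you sketch cannot prove it.

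For $N=\infty$ your density claim is false. The norm closure of $\Span\{I,e_n\otimes e_n\}$ consists only of diagonal operators with convergent diagonal, and $T$ is only norm continuous, so $\overline{\Span}\{I,Q^{(n)}\}=\mathcal{D}_{\{f_n\}}$ fails. The separation argument controls only the minimal members of the laminar family, not $S_1$ and $S_2$. The appeals to $T^{-1}$-symmetry, ``dimension counts'' and ``passing to limits'' are not arguments.

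In fact, for $N=\infty$ the information you extract is provably insufficient. That information is: the images of rank-one $\{e_n\}$-diagonal projections under $T$ and $T^{-1}$, their pairwise relations, $T(I)=cI$, and continuity. Let $L$ be a Banach limit and set $\tau(x)=(-x_1+x_2+L(x),x_2,x_3,\dots)$. This is a bounded involution of $\ell^\infty$ with
\[
\tau(\mathbf 1)=\mathbf 1,\quad \tau(\delta_1)=-\delta_1,\quad \tau(\delta_2)=\delta_1+\delta_2,\quad \tau(\delta_n)=\delta_n \ (n\ge 3).
\]
Every constraint you list therefore holds, for $\tau$ and for $\tau^{-1}=\tau$. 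The family $\{1\},\{1,2\},\{3\},\dots$ is laminar, yet $S_2=\{1,2\}$ is neither a singleton nor a co-singleton. Such a $\tau$ is ruled out only by non-rank-one projections: $\tau(\chi_A)=\chi_A+\tfrac12\delta_1$ whenever $1,2\notin A$ and $L(\chi_A)=\tfrac12$.

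The idea you are missing is the paper's transport of commutants. Let $B$ be self-adjoint and commute with the rank-one $P_i$. Each spectral projection $P(E)$ of $B$ either contains or is orthogonal to $P_i(\cH)$. Lemma \ref{projection1} then gives a $Q(E)$ with $T(\Span\{P(E),I\})=\Span\{Q(E),I\}$ that commutes with $Q_i$. Approximating $B$ in norm by finite spectral sums and using continuity of $T$ yields $T(\mathcal{C}(P_i))\subset\mathcal{C}(Q_i)$. Now $\mathcal{C}(P_1)+\mathcal{C}(P_2)$ has codimension $2$, so $\mathcal{C}(Q_1)+\mathcal{C}(Q_2)$ has codimension at most $2$. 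When $Q_1Q_2=0$ that codimension is at least $4$ unless both $Q_i$ have rank one.

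Alternatively, you could stay inside your diagonal framework by applying Lemma \ref{projection1} to all $\{e_n\}$-diagonal projections, for both $T$ and $T^{-1}$. When $N\ge 4$, the rank-one and co-rank-one diagonal projections are exactly those satisfying one of the four relations with every diagonal projection. When $N=3$ every proper nonzero diagonal projection already has rank or co-rank one. This property is transported by $T$, which finishes the argument without using continuity.
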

\begin{proof}
Since $P_1P_2=0$, by Lemma \ref{projection1}, there are projections $Q_1, Q_2 \in \BH \setminus \{I,0\}$ such that $Q_1Q_2=0$. Fix $i=1,2$. We denote $\mathcal{C}(P_i)=\{ A \in \BH \mid P_iA=AP_i\}$, which is the commutator for  $P_i$. We shall show that $T(\mathcal{C}(P_i)) \subset \mathcal{C}(Q_i)$. Let $B \in \mathcal{C}(P_i)$. We assume that $B$ is a self-adjoint operator. Then there is a spectrum measure $P$ on $\sigma(B)$ such that $B=\int \lambda dP$. Since $BP_i=P_iB$,  for any Borel set $E \subset \sigma(B)$,  $P_i$ commutes with $P(E)$. Hence $P_i P(E)$ is a projection for any $E$.   Fix $E \subset \sigma(B)$. Assume that $P_i P(E) \neq 0$. As $P_i$ is  of rank $1$, we have $P_i(\cH)  \subset P(E)(\cH)$. In this case, $P_i (\cH) \perp (I-P(E))(\cH)$. Otherwise we have $P_i P(E) =0$, which means $P_i(\cH) \perp P(E)(\cH)$. In either case,  by Lemma \ref{projection1}, there is a projection $Q(E) \in \BH $
such that  
\[
T(\Span\{P(E) , I\})=\Span \{ Q(E), I\},
\]
which satisfies $Q_iQ(E)=0$ or $Q_iQ(E)=Q_i$ or $Q_iQ(E)=Q(E)$ or $(I-Q_i)(I-Q(E))=0$. Therefore  we get 
\[
Q(E) \in \mathcal{C}(Q_i), \quad \text{for all} \ E. 
\]
We define the inclusion map $\operatorname{id}: \sigma(B) \to \mathbb{C}$ by $\operatorname{id}(\lambda)=\lambda$ for any $\lambda \in \sigma(B)$. 
Let $f_n=\sum_{i=1}^{k_n} a_{n_i} \chi_{E_{n_i}}$ be a simple function on $\sigma(B)$ which satisfies $f_n (\lambda) \le \operatorname{id}(\lambda) $ for any $\lambda \in \sigma(B)$ and $\|f_n-\operatorname{id}\|_{\infty} \to 0$ in $B_{\infty}(\sigma(B))$, which is the Banach algebra of all bounded complex-valued measurable functions on $\sigma(B)$. Let $B_n=\int f_n dP=\sum_{i=1}^{k_n} a_{n_i} P(E_{n_i})$. Then we have
\[
T(B_n)=T(\sum_{i=1}^{k_n} a_{n_i} P(E_{n_i}))=\sum_{i=1}^{k_n} a_{n_i} T(P(E_{n_i}))=\sum_{i=1}^{k_n} a_{n_i} (\alpha_{n_i}Q(E_{n_i})+\beta_{n_i} I)
\]
for some $\alpha_{n_i}, \beta_{n_i} \in \mathbb{C}$. This implies that 
\[
T(B_n) \in \mathcal{C}(Q_i)
\]
for all $n$. Since $B_n \to B$ in $\BH$ and $T$ is a continuous map, we get $T(B_n) \to T(B)$. As $\mathcal{C}(Q_i)$ is closed subspace of $\BH$, we get $T(B) \in \mathcal{C}(Q_i)$. Hence we get 
\[
T(\{ B \in \mathcal{C}(P_i) \mid B=B^{*}\} ) \subset \mathcal{C}(Q_i).
\]
For any $B \in \mathcal{C}(P_i)$. Note that $B_1:=(B+B^{*})/2 \in \mathcal{C}(P_i)$ and $B_2:=(B-B^{*})/2i \in \mathcal{C}(P_i)$ since $P_i=P_i^{*}$. Since  $T(B)Q_i=T(B_1+iB_2)Q_i=T(B_1)Q_i+iT(B_2)Q_i=Q_iT(B_1)+Q_iiT(B_2)=Q_iT(B)$, we get
\[
T( \mathcal{C}(P_i)  ) \subset \mathcal{C}(Q_i).
\]
Applying \cite[Lemma 3.2]{Oml},  the subspace $ \mathcal{C}(P_1)+ \mathcal{C}(P_2)$ of $\BH$ has codimension $2$ in $\BH$. Since we have
\[
T(\mathcal{C}(P_1)+ \mathcal{C}(P_2)) \subset \mathcal{C}(Q_1) + \mathcal{C}(Q_2), 
\]
the codimension of the subspace $ \mathcal{C}(Q_1)+ \mathcal{C}(Q_2)$ of $\BH$ is less than or equal to $2$.  
 Suppose that the rank of $Q_1$ is greater than $1$.  Choose linearly independent vectors $x_1, x_2 \in Q_1(\cH)$ with $x_1 \perp x_2$ and $x_3 \in Q_2(\cH)$. Then the image of the quotient map of the operators
\[
x_1 \otimes x_3, \quad x_2 \otimes x_3, \quad x_3 \otimes x_1, \quad x_3 \otimes x_2
\]
are linearly independent operators of $\BH/\mathcal{C}(Q_1) + \mathcal{C}(Q_2)$. This is a contradiction to codimension of $ \mathcal{C}(Q_1)+ \mathcal{C}(Q_2)$ of $\BH$ is less than or equal to $2$. Thus the rank of $Q_1$ is $1$ and similarly the rank of $Q_2$ is $1$.
\end{proof}

\begin{theorem}[Uhlhorn's theorem \cite{UHL}]
    Suppose that $\dim \cH \ge 3$. Let $\Phi: \PH \to \PH$ be a bijective map such that 
    \[
P_1P_2=0 \Longleftrightarrow \Phi(P_1)\Phi(P_2)=0
    \]
    for any $P_1, P_2 \in \PH$. Then there is a unitary operator or antiunitary operator $U \in \BH$ such that $\Phi(P)=UPU^{*}$ for all $P \in \PH$. 
    
\end{theorem}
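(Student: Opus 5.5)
This is Uhlhorn's classical theorem, and the paper only cites it. My plan is to reduce it to the fundamental theorem of projective geometry and then use the orthogonality hypothesis to upgrade the resulting semilinear map to a unitary or antiunitary. I will write $P_x$ for the rank one projection onto $\bC x$.

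\textbf{Step 1: lift $\Phi$ to closed subspaces.} For a closed subspace $M\subset\cH$, put $\mathcal{A}(M)=\{P\in\PH \mid P(\cH)\subset M\}$. Since $M=M^{\perp\perp}$, a rank one projection $P$ lies in $\mathcal{A}(M)$ if and only if $PQ=0$ for every $Q\in\mathcal{A}(M^\perp)$. This characterizes $\mathcal{A}(M)$ purely through the orthogonality relation. Define $\widetilde{\Phi}(M)$ to be the closed span of the ranges of $\Phi(P)$ for $P\in\mathcal{A}(M)$.

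Using only that $\Phi$ and $\Phi^{-1}$ preserve orthogonality in both directions, I will show:
\[
\widetilde{\Phi}(M^\perp)=\widetilde{\Phi}(M)^\perp, \qquad \mathcal{A}(\widetilde{\Phi}(M))=\Phi(\mathcal{A}(M)).
\]
Consequently $\widetilde{\Phi}$ is an order-preserving bijection of the lattice of closed subspaces that commutes with $\perp$.

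\textbf{Step 2: lines go to lines.} A finite-dimensional subspace $M$ has dimension $k$ exactly when the lattice interval below it contains a maximal chain of length $k$. This property is lattice-theoretic, so $\widetilde{\Phi}$ preserves dimension for finite-dimensional subspaces. In particular it maps $2$-dimensional subspaces onto $2$-dimensional subspaces. Hence the induced map on the projective space of $\cH$ sends collinear triples to collinear triples, and $\Phi^{-1}$ does the same.

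Since $\dim\cH\ge3$, the fundamental theorem of projective geometry applies. It gives a bijective $\tau$-semilinear map $A:\cH\to\cH$, for some field automorphism $\tau$ of $\bC$, such that $\Phi(P_x)=P_{Ax}$. In infinite dimensions I would apply the theorem on each finite-dimensional subspace and glue the pieces. Alternatively I would invoke its version for arbitrary vector spaces, which needs only that the map be a collineation of the full projective space.

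\textbf{Step 3 (main obstacle): the semilinear map is a scaled unitary or antiunitary.} The hypothesis now reads $\langle Ax,Ay\rangle=0 \iff \langle x,y\rangle=0$.

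First I will show that $\tau$ is the identity or complex conjugation. Fix orthonormal $e_1,e_2$; the vectors $Ae_1,Ae_2$ are then orthogonal.
\begin{itemize}
\item For unit $e_1,e_2$, the vectors $e_1+\lambda e_2$ and $e_1-\overline{\lambda}^{-1}e_2$ are orthogonal.
\item Their images $Ae_1+\tau(\lambda)Ae_2$ and $Ae_1-\tau(\overline{\lambda}^{-1})Ae_2$ must therefore also be orthogonal.
\item Writing $r=\|Ae_2\|^2/\|Ae_1\|^2$, this gives $\overline{\tau(\lambda)}\,\tau(\overline{\lambda}^{-1})\,r=1$ for all $\lambda\ne0$.
\end{itemize}
Taking $\lambda=1$ gives $r=1$. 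For every $\lambda\ne0$ this yields $\overline{\tau(\lambda)}=\tau(\overline{\lambda})$ and $\tau(\lambda)\overline{\tau(\lambda)}=\tau(|\lambda|^2)$. So $\tau$ commutes with conjugation and maps nonnegative reals to nonnegative reals. A field automorphism of $\mathbb{R}$ that preserves squares preserves order, hence is the identity on $\mathbb{R}$. Therefore $\tau$ is the identity or conjugation.

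The identity $r=1$ holds for every orthonormal pair, so $\|Ae\|$ is the same constant $c>0$ for all unit vectors $e$. Combined with the preservation of orthogonality, polarization gives $\langle Ax,Ay\rangle=c^2\langle x,y\rangle$ in the linear case, and $c^2\overline{\langle x,y\rangle}$ in the antilinear case. Thus $U=c^{-1}A$ is unitary or antiunitary, and $\Phi(P)=UPU^{*}$.

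I expect the main difficulty to be the infinite-dimensional bookkeeping in Steps 1 and 2: verifying that $\widetilde{\Phi}$ is well defined on closed subspaces and preserves dimension there.
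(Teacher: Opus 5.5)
The paper gives no proof of this statement. It is quoted from \cite{UHL} and used as a black box in Lemma~\ref{UHL}. Your sketch is correct and follows the classical route:
\begin{enumerate}
\item Two-sided orthogonality on $\PH$ recovers the lattice of closed subspaces together with its orthocomplementation.
\item Dimension of finite-dimensional subspaces is lattice-theoretic, so you get a collineation of the projective space of $\cH$.
\item The fundamental theorem of projective geometry, in Baer's form (valid in arbitrary dimension $\ge 3$), gives a $\tau$-semilinear bijection $A$.
\item Orthogonality then forces $\tau\in\{\mathrm{id},\overline{\,\cdot\,}\}$ and $A=cU$ with $U$ unitary or antiunitary.
\end{enumerate}

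The details you give check out:
\begin{enumerate}
\item In Step 1, the inclusion $\widetilde{\Phi}(M)^\perp\subset\widetilde{\Phi}(M^\perp)$ uses surjectivity of $\Phi$ and the backward implication of the hypothesis. The identity $\mathcal{A}(\widetilde{\Phi}(M))=\Phi(\mathcal{A}(M))$ then follows by running the same argument for $M^\perp$ and using $\widetilde{\Phi}(M^\perp)^\perp=\widetilde{\Phi}(M)$.
\item In a finite maximal chain below a closed subspace every step has codimension one, since otherwise you could insert $M_j\oplus\Span\{v\}$. So your chain-length criterion for dimension is valid.
\item The computation with $e_1+\lambda e_2\perp e_1-\overline{\lambda}^{-1}e_2$ is right. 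It gives $r=1$, $\tau\circ\overline{\,\cdot\,}=\overline{\,\cdot\,}\circ\tau$, and positivity of $\tau$ on $[0,\infty)$.
\end{enumerate}

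There are two small points to tighten.
\begin{enumerate}
\item Drop the ``glue on finite-dimensional pieces'' option and invoke the theorem for the whole projective space. Gluing would require checking that the local semilinear maps and field automorphisms are compatible.
\item In Step 3, going from $r=1$ for orthonormal pairs to $\|Ae\|=c$ for all unit vectors needs, for non-orthogonal unit vectors $e,f$, a unit vector $g\perp e,f$. This is a second place where $\dim\cH\ge 3$ is used, besides the fundamental theorem. State it explicitly.
\end{enumerate}

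Compared with the paper's bare citation, your argument shows exactly why $\dim\cH\ge3$ cannot be dropped. When $\dim\cH=2$, each $P\in\PH$ has exactly one orthogonal partner, $I-P$. So any bijection commuting with $P\mapsto I-P$ satisfies the hypothesis, and no rigidity is possible. This is consistent with the paper treating that case separately in Theorem~\ref{main0}.
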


\begin{lemma}\label{UHL}
There is a unitary operator or antiunitary operator $U$ such that 
\[
T(P) \in \operatorname{span}\{UPU^{*}, I\} , \quad P \in \PH.
\]
\end{lemma}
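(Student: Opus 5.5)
We will build a map $\Phi:\PH\to\PH$ from $T$, check that it is a bijection preserving orthogonality in both directions, and then apply Uhlhorn's theorem.

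\emph{Step 1: defining $\Phi$.} Fix $P\in\PH$. Since $\dim\cH\ge 3$, we can pick $P'\in\PH$ with $PP'=0$. Lemma \ref{comm} then gives $Q\in\PH$ with $T(P)\in\Span\{Q,I\}$. Because $T$ is injective and $T(I)=cI$ with $c\neq0$ (Lemma \ref{I3}), $T(P)\notin\Span\{I\}$. Hence $\Span\{T(P),I\}=\Span\{Q,I\}$, which is a two-dimensional space.

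\emph{Step 2: uniqueness of $Q$.} Suppose $\Span\{Q,I\}=\Span\{Q',I\}$ with $Q,Q'\in\PH$. Then $Q'=\alpha Q+\beta I$. Since $\dim\cH\ge3$, $Q$ has a kernel of dimension at least $2$. Applying both sides to a vector in $\ker Q$ forces $\beta=0$, because $Q'$ has rank one. Then $\alpha=1$ since $Q'$ is idempotent and nonzero. So $Q$ is uniquely determined by $P$, and we set $\Phi(P)=Q$. Moreover, $T(\Span\{P,I\})=\Span\{\Phi(P),I\}$.

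\emph{Step 3: $\Phi$ is bijective.} The hypothesis (2) is symmetric: $T^{-1}$ satisfies it with $\varphi^{-1}$, and $T^{-1}$ is continuous by the open mapping theorem. So $T^{-1}$ yields a map $\Phi'$ by the same construction. From $T^{-1}(\Span\{Q,I\})=\Span\{\Phi'(Q),I\}$ and uniqueness, $\Phi'$ and $\Phi$ are mutually inverse. The rank-one conclusion for $T^{-1}$ again comes from Lemma \ref{comm}.

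\emph{Step 4: orthogonality is preserved.} Suppose $P_1P_2=0$. Lemma \ref{comm} gives rank one projections $Q_1,Q_2$ with $Q_1Q_2=0$ and $T(P_i)\in\Span\{Q_i,I\}$. By the uniqueness in Step 2, $Q_i=\Phi(P_i)$, so $\Phi(P_1)\Phi(P_2)=0$. Applying the same argument to $T^{-1}$ and $\Phi^{-1}$ gives the converse: $\Phi(P_1)\Phi(P_2)=0$ implies $P_1P_2=0$.

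\emph{Step 5: conclusion.} Uhlhorn's theorem now yields a unitary or antiunitary $U$ with $\Phi(P)=UPU^*$. Therefore $T(P)\in\Span\{UPU^*,I\}$ for every $P\in\PH$.

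\emph{Main obstacle.} The delicate point is the well-definedness and uniqueness of $\Phi(P)$ in Steps 1–2. Lemma \ref{comm} as stated produces $Q_1,Q_2$ for a pair $(P_1,P_2)$, and a priori the choice could depend on the partner. We must check that $\Span\{Q,I\}$ is intrinsic to $P$. This follows because it equals $\Span\{T(P),I\}$, combined with the uniqueness argument, which is the place where $\dim\cH\ge3$ is used. The invertibility of $\Phi$ likewise depends on applying everything to $T^{-1}$.
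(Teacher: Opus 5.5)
Your proof is correct and follows essentially the same route as the paper: define $\Phi$ via Lemma \ref{comm}, prove uniqueness of the rank-one projection using $\dim\cH\ge 3$, obtain orthogonality preservation in both directions by running the same argument for $T^{-1}$, and conclude with Uhlhorn's theorem. Your version is slightly more careful than the paper's, since it explicitly checks three points the paper leaves implicit: that $T(P)$ is not a scalar multiple of $I$ (needed for uniqueness), that $T^{-1}$ is continuous by the open mapping theorem, and that $\Phi$ is a bijection as Uhlhorn's theorem requires.
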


\begin{proof}
Let $P \in \PH$.  Lemma \ref{comm} shows there is $Q_1 \in \PH$ such that $T(P) \in \operatorname{span}\{Q_1, I\}$. To show  the uniqueness of  $Q_1$,  suppose that there are $Q_1, Q_2 \in \PH$ such that 
$T(P)\in \operatorname{span}\{Q_1, I\}=\operatorname{span}\{Q_2, I\}$. Then there are $\alpha_i, \beta_i \in \mathbb{C}$ such that
\[
\alpha_1 Q_1+\beta_1I=\alpha_2 Q_2+\beta_2I=T(P).
\]
Since $\dim \cH \ge 3$, there is non-zero vector $y \in \operatorname{span}\{ Q_1(\cH), Q_2(\cH)\}^{\perp}$. We have $\beta_1y=\beta_2y$. Thus $\beta_1=\beta_2$. As $Q_1$ and $Q_2$ are non-trivial projections, we get $Q_1=Q_2$. Therefore we can define  $\Phi: \PH \to \PH$ by $T(P) \in \operatorname{span}\{\Phi(P) ,I \} $ for any $P \in \PH$. Let $P_1,P_2 \in \PH $ with $P_1P_2=0$. Applying Lemma \ref{comm},  we get
\[
\Phi(P_1)\Phi(P_2)=0.
\] 
By a similar argument as above for $T^{-1}$,  we also obtain 
\[
\Phi(P_1) \Phi(P_2)=0  \Longrightarrow P_1P_2=0.
\]
Applying Uhlhorn's theorem, there is a unitary operator or antiunitary operator $U \in \BH$ such that $\Phi(P)=UPU^{*}$ for all $P \in \PH$. 
\end{proof}

\begin{proof}[{\textbf {Proof of Theorem \ref{main}: $\bm{(2) \Rightarrow (3)}$}}] 
Let $\{e_n\} \in \mathcal{E}(\cH)$. Then  $e_i \otimes e_i \in \mathcal{D}_{\{e_n\}}$. Thus we have $T(e_i \otimes e_i) \in \mathcal{D}_{\varphi(\{e_n\})}$. By Lemma \ref{UHL} we have $T(e_i \otimes e_i) \in \Span \{U(e_i \otimes e_i)U^{*}, I\} =\Span \{U(e_i) \otimes U(e_i), I\} $. Thus $U(e_i) \otimes U(e_i) \in \mathcal{D}_{\varphi(\{e_n\})}$. This implies that 
$D_{\{U(e_i)\}} \subset \mathcal{D}_{\varphi(\{e_n\})}$.
On the other hand,  $\{U(e_i)\}$ and $\varphi(\{e_n\})$ are orthonormal bases of $\cH$. Hence
\[
D_{\{U(e_i)\}} = \mathcal{D}_{\varphi(\{e_n\})}.
\]
Define a bijective linear map $\widetilde{T}$ on $B(\cH)$ by $A \mapsto U^{*}T(A)U$. Then 
\[
\widetilde{T}(D_{\{e_n\}})=U^{*}T(D_{\{e_n\}})U=U^{*}D_{\{U(e_i)\}}U=D_{\{e_n\}}
\]
holds for any $\{e_n\} \in \mathcal{E}(\cH)$. By applying Theorem \ref{AOST}, there exists $c \in \mathbb{C} \setminus \{0\}$, a bounded linear functional $f$ on $\BH$ such that 
\[
\widetilde{T}(A)=cA+f(A)I, \quad A \in \BH.
\]   
Therefore we have
\[
T(A)=cUAU^{*}+f(A)I, \quad A \in \BH.
\]
\end{proof}

\end{document}